\theoremstyle{plain}
\newtheorem{thm}{Theorem}
\newtheorem{lem}[thm]{Lemma}
\newtheorem{prop}[thm]{Proposition}
  \newtheorem{thma}{Theorem A}
    \newtheorem{thmb}{Theorem B}
  \newtheorem{propc}{Proposition C}
   \newtheorem{cord}{Corollary D}
\newtheorem{thme}{Theorem E}
  \newtheorem{thmf}{Theorem F}
\theoremstyle{definition}
  \newtheorem{defn}[thm]{Definition}
  \newtheorem{ex}[thm]{Example}
    \newtheorem{rem}[thm]{Remark}
\newcommand{\N}{{\mathbb N}}
\newcommand{\R}{{\mathbb R}}
\newcommand{\sbar}{\,\;\vert\,\;}
\newcommand{\longsbar}{\,\;\mathstrut\vrule\,\;}
\begin{document}
 
\title[]{ The Euler characteristic of the regular spherical polygon spaces}
\author[]{Yasuhiko KAMIYAMA}
\address[]{Department of Mathematics, University of the Ryukyus, 
Nishihara-Cho, Okinawa 903-0213, Japan} 
\email{kamiyama@sci.u-ryukyu.ac.jp} 
\subjclass[2000]{Primary 58E05; Secondary 58D29} 
\keywords{Spherical polygon space; Morse function; Euler characteristic}

\begin{abstract} 
Let $a$ be a real number satisfying $0<a<\pi$. We denote by $M_n(a)$ the configuration space of regular spherical $n$-gons
with side-lengths $a$. The purpose of this paper is to determine $\chi (M_n(a))$ for all $a$ and odd $n$.
To do so, we construct a manifold $X_n$ and a function $\mu: X_n \to \R$ such that $\mu^{-1}(a)=M_n(a)$. 
In fact, the function $\mu$ is different from the well-known ``wall-crossing" function. 
We determine the index of each critical point of $\mu$. Since a level set is obtained by successive Morse surgeries, we can determine $\chi (M_n(a))$. 
\end{abstract}
\maketitle

\section{Introduction} 
Recently, the topology of polygon spaces in the Euclidean space of dimension two or three has been considered by many authors.
The study of planar polygon spaces started in \cite{H,KM1,W}. For example, the homology groups were determined in \cite{FS}. 
On the other hand, the study of spatial polygon spaces started in \cite{KM2}. 

Morse theory plays a key role in the study of polygon spaces. \cite{F} is an excellent exposition about polygon spaces with emphasis on Morse theory. 
In \cite{MT}, Milgram and Trinkle obtained results by making excellent use of Morse surgery. 

Later, Kapovich and Millson (\cite{KM3}) studied spherical polygon spaces. They determined the index of the ``wall-crossing" Morse function. Using this, they determined the topological type of regular spherical pentagon spaces.  The purpose to this paper is to determine the Euler characteristic of 
regular spherical polygon spaces. For that purpose, we consider a Morse function which is different from the wall-crossing function. 

We give the definition of regular polygon spaces. 
Let $a$ be a real number satisfying $0<a<\pi$ and $n$ be an integer $\geq 3$ . We set
\begin{align}
A_n (a):= \{P=(u_1,\dots,u_n)\in (S^2)^n \sbar d(u_i,&u_{i+1})=a \;\; \text{for $1 \leq i \leq n-1$}\notag\\
& \text{and}\;\; d(u_n,u_1)=a\}. 
\end{align}
Here $d$ is the spherical distance. Let $SO(3)$ act on $A_n(a)$ diagonally and we set
\begin{equation}
M_n(a):= A_n(a)/SO(3). 
\end{equation}

When $n$ is even, $M_n(a)$ has critical points for all $a$. More precisely, singular points are given by degenerate polygons, 
where a degenerate polygon is defined in 
Definition \ref{degenerate}.
Hence, in what follows, we will always assume $n$ to be odd and set $n=2m+1$. 

Let us realize $M_n (a)$ as a level set. 
We first set
\begin{align*}
B_n:=\{(P,a)=((u_1,\dots,u_n),a)\in &(S^2)^n\times (0,\pi) \sbar d(u_i,u_{i+1})=a \notag \\
&\text{for $1 \leq i \leq n-1$}\;\; \text{and}\;\; d(u_n,u_1)=a\}.
\end{align*}
Let $SO(3)$ act on $B_n$ by
$$(P,a)g:= (Pg,a),$$
where $g \in SO(3)$. Then we set
\begin{equation*}
X_n:= B_n/SO(3).
\end{equation*}
We define the function $\mu: X_n \to \R$ by 
\begin{equation}\label{4}
\mu(P,a)=a. 
\end{equation}
Note that for all $a \in (0,\pi)$, we have 
\begin{equation}\label{5}
\mu^{-1}(a)=M_n (a).
\end{equation}

The goal of this paper is to prove that $\mu$ is a Morse function. Since a level set is obtained by successive Morse surgeries, we can determine
$\chi (M_n(a))$ for all $a$. 

We study how our function $\mu$ is different from the well-known wall-crossing Morse function.  Although \cite{KM3} considers 
the spherical polygon spaces of arbitrary side-lengths, we state in the form which fits our situation better. 
We first set
\begin{equation*}
C_n (a) :=  \{T=(u_1,\dots,u_n)\in (S^2)^n \sbar d(u_i,u_{i+1})=a \;\; \text{for $1 \leq i \leq n-1$}\}.
\end{equation*}
Then we set
\begin{equation*}
N_n (a):=C_n(a) /SO(3).
\end{equation*}
We define the function $\rho_a: N_n(a) \to \R$ by 
\begin{equation}\label{g}
\rho_a(u_1,\dots,u_n)=d(u_n,u_1). 
\end{equation}
It is proved in \cite{KM3} that $\rho_a$ is a Morse function. (See Theorem \ref{KM3} for more details.) 
If we want to obtain information on all regular spherical polygon spaces from the main result in \cite{KM3}, 
we need to consider the function $\rho_a$ for various $a$. 
For example, \cite[\S6]{KM3} determined the topological type of all regular spherical pentagon spaces by using this method.

On the other hand, for our case, 
it will suffice to consider just one manifold $X_n$
and one map $\mu$ in order to obtain information on all regular spherical polygon spaces. 
In \cite{K2}, we reproved the results in \cite[\S6]{KM3} along our lines. 

Note that the analogue of the function $\rho_a$ for planar or spatial polygon spaces is well-studied in \cite{KM1} and \cite{KM2}.
On the other hand, the analogue of the function $\mu$ for regular planar or spatial polygon spaces is ineffective. In fact, the topological type of these polygon spaces does not depend on the equilateral side-lengths. 

This paper is organized as follows. In \S2 we state our main results. Theorems A and B give critical points of $\mu$ and their indices, respectively. 
Proposition C, Corollary D and Theorem E
are numerical results for general $a$, which are obtained from Theorems A and B. 
The most typical case is when $a=\pi/2$. We state the results for this case in Theorem F.

In \S3 we first recall the main result in \cite{KM3}, which describes the index of $\rho_a$. (See Theorem \ref{KM3}.)
Then we state Theorem \ref{key}, which  describes the index of $\mu$. 
We deduce the theorem from Theorem \ref{KM3}. 
In \S4 we prove Theorems A and B. In \S5 we prove Proposition C and Corollary D. 
In \S6 we prove Theorem E and in \S7 we prove Theorem F.

\section{Statements of the main results}
\begin{defn}\label{degenerate} A polygon $P \in M_n(a)$ is said to be degenerate if and only if it lies in a great circle $\gamma$ in $S^2$. 
\end{defn}

\begin{thma} An element $(P,a) \in X_n$ is a critical point of $\mu$ if and only if $P$ is degenerate.
\end{thma}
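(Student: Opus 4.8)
The plan is to characterize critical points of $\mu$ by computing the differential of the defining constraint maps and identifying where the level-set structure degenerates. The key observation is that $\mu$ simply reads off the coordinate $a$, so its critical points are exactly the points of $X_n$ where the projection to the $a$-axis fails to be a submersion — equivalently, the points where the tangent space to $B_n/SO(3)$ does not surject onto $\R$ under $d\mu$. Thus the real content is to analyze the rank of the map that cuts out $B_n$ inside $(S^2)^n \times (0,\pi)$, namely $F(u_1,\dots,u_n,a) = (d(u_1,u_2)-a,\dots,d(u_n,u_1)-a)$, and to show that $a$ can be varied to first order if and only if the polygon $P$ is nondegenerate.

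**First I would** set up the linearization. Near a nondegenerate point, each constraint $d(u_i,u_{i+1}) = a$ is a smooth submanifold condition, and the gradient of $d(u_i,u_{i+1})$ with respect to the endpoints $u_i, u_{i+1}$ points along the geodesic joining them. I would show that an infinitesimal variation $(\dot u_1,\dots,\dot u_n,\dot a)$ tangent to $B_n$ satisfies, for each edge, a linear equation relating the tangential components of $\dot u_i,\dot u_{i+1}$ to $\dot a$. The claim $d\mu \neq 0$ then amounts to showing this linear system admits a solution with $\dot a \neq 0$, modulo the infinitesimal $SO(3)$-action (the three Killing fields on $(S^2)^n$). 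Concretely, one expands or contracts the polygon: if $P$ does not lie in a single great circle, one can find a simultaneous variation of all vertices that increases every edge length by the same first-order amount $\dot a > 0$, so $d\mu$ is onto $\R$ and the point is regular.

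**The converse — that degenerate polygons are critical — is where I expect the main obstacle.** When $P$ lies in a great circle $\gamma$, the ``expanding'' direction that worked above becomes obstructed: the vertices are constrained to a one-dimensional circle where the second-order geometry of $d$ kicks in, and one must verify that \emph{every} admissible first-order variation forces $\dot a = 0$. I would argue that for a degenerate configuration the linearized constraint equations, after quotienting by the $SO(3)$-action, span a codimension that leaves no room to move $a$ to first order; the signed edge-length functionals become linearly dependent in a way that pins $\dot a$ to $0$. Handling this cleanly requires a careful choice of local coordinates adapted to $\gamma$ (for instance, writing each $u_i$ via an angle along the great circle plus a normal displacement) and tracking which variations survive the quotient.

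**As a cross-check and shortcut,** I would relate $\mu$ to the wall-crossing function $\rho_a$ of \cite{KM3} via the fibration structure: since $M_n(a) = \rho_a^{-1}(a)$ inside $N_n(a)$ and \cite{KM3} already identifies the critical points of $\rho_a$ with degenerate polygons, I expect the critical-point sets of $\mu$ and of the family $\{\rho_a\}$ to coincide set-theoretically, even though the two functions differ. This gives an independent route to Theorem~A and lets me import the degeneracy characterization rather than rederive the delicate second-order analysis from scratch.
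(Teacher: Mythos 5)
Your closing ``cross-check and shortcut'' is, in essence, the proof the paper actually gives: the paper observes that the Zariski tangent space satisfies $T_P M_n(a)=\ker d\mu|_{(P,a)}$, so that $(P,a)$ is a critical point of $\mu$ exactly when $P$ is a singular point of the level set $M_n(a)$, and then imports from Kapovich--Millson (\cite{KMRIMS}, Theorem 1.1, the deformation-theoretic companion to \cite{KM3}) the fact that $P$ is a singular point of $M_n(a)$ if and only if $P$ is degenerate. Your expectation that the critical sets of $\mu$ and of the family $\rho_a$ coincide set-theoretically is correct for precisely this reason: both functions cut out the same space $M_n(a)$ as a level set in a smooth ambient manifold, so both criticality conditions reduce to singularity of $M_n(a)$ at $P$. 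Had you led with this reduction instead of relegating it to a fallback, your argument would match the paper's.

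The direct route you lead with, however, has a genuine gap in both directions. The claim that for nondegenerate $P$ ``one can find a simultaneous variation of all vertices that increases every edge length by the same first-order amount'' is not a stepping stone toward the theorem --- it \emph{is} the theorem (surjectivity of $d\mu$ at nondegenerate configurations), and you offer no mechanism for producing such a variation; a naive ``expansion'' is not available on $S^2$ for a general non-convex, self-intersecting closed configuration, and the actual proof in \cite{KMRIMS} goes through an analysis of the infinitesimal deformation space rather than an explicit expanding vector field. Likewise, the degenerate direction is explicitly flagged as ``the main obstacle'' and left at the level of an expectation that the linearized constraints ``pin $\dot a$ to $0$.'' The missing input there is that displacements normal to the great circle $\gamma$ change edge lengths only at second order, so first-order variations of $a$ are governed by the in-circle deformations, where the closing condition $a(f-b)=2\pi w$ together with $f\neq b$ (automatic for odd $n$) forces $\dot a=0$. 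Since neither step is carried out, the primary argument does not stand on its own and is rescued only by the appeal to the literature in your last paragraph.
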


Our next task is to determine the index of a critical point of $X_n$. 
We denote by $\N =\{1,2,3,\cdots\}$ the set of natural numbers.
\begin{thmb}
We set
\begin{equation}\label{gam}
\Gamma_n:= \{(\alpha,\beta) \in \N \times \N \sbar \text{$\alpha$ is odd, $\beta$ is even and $\beta<\alpha \leq n$}\}.
\end{equation}
Then the following assertions hold{\rm :}
\begin{enumerate}
\item To each $(\alpha,\beta) \in \Gamma_n$, there corresponds a certain number of critical points of $\mu$. 
(See \eqref{F} for more precise correspondence.)
All critical points are non-degenerate such that
their information is given by Table \ref{tab1}, where we set 
\begin{equation}\label{set}
\alpha=2s+1 \;\; \text{and} \;\;\beta=2t.
\end{equation}

\begin{table}[H]
\begin{center}
\begin{tabular}{|c|c|c|}\hline
critical value & the number of critical points & the index\\ \hline
$\displaystyle{\frac{\beta}{\alpha} \pi}$ & $\displaystyle{{n \choose m-s}}$ & $\displaystyle{m-s+2t-1}$\\ \hline
\end{tabular}
\end{center}
\caption{The information on $\mu$ at $(\alpha,\beta)$}
\label{tab1}
\end{table}
\item Conversely, a critical point of $\mu$ is attained by a unique $(\alpha,\beta) \in \Gamma_n$. 
\end{enumerate}
\end{thmb}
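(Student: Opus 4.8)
The plan is to combine the characterization of critical points from Theorem~A with an explicit analysis on a single great circle, followed by a Hessian (bending) computation. First I would make the set of degenerate polygons completely explicit. Fix the great circle $\gamma$ to be the equator and coordinatize it by an angle $\theta\in\R/2\pi\Z$; by transitivity of $SO(3)$ on great circles this loses nothing. A degenerate $P$ is then recorded by the angles $\theta_1,\dots,\theta_n$ of its vertices, and since $d(u_i,u_{i+1})=a$ with $0<a<\pi$ forces $\theta_{i+1}-\theta_i\equiv\varepsilon_i a\pmod{2\pi}$ with $\varepsilon_i\in\{+1,-1\}$, the configuration is encoded by a sign vector $\varepsilon=(\varepsilon_1,\dots,\varepsilon_n)$. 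The closing condition $u_{n+1}=u_1$ reads $a\sum_i\varepsilon_i\equiv 0\pmod{2\pi}$, i.e. $a(2k-n)=2\pi\ell$ where $k=\#\{i:\varepsilon_i=+1\}$ and $\ell\in\Z$.

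Since $n=2m+1$ is odd, $2k-n$ is a nonzero odd integer, and the stabilizer of $\gamma$ in $SO(3)$ contains the half-turn about an equatorial axis, which sends $\theta\mapsto-\theta$ and hence acts by $\varepsilon\mapsto-\varepsilon$ (so $k\mapsto n-k$, $\ell\mapsto-\ell$). Using this I may assume $2k-n>0$ and set $\alpha:=2k-n=2s+1$ and $\beta:=2\ell=2t$; the admissibility $0<a=\tfrac{2\ell}{2k-n}\pi<\pi$ becomes exactly that $\alpha$ is odd, $\beta$ is even and $0<\beta<\alpha\le n$, i.e. $(\alpha,\beta)\in\Gamma_n$, with critical value $a=\tfrac{\beta}{\alpha}\pi$. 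For fixed $(\alpha,\beta)$ the winding $\ell=t$ is forced, so the critical points correspond to the sign vectors with $k=m+s+1$ plus-signs; there are $\binom{n}{m+s+1}=\binom{n}{m-s}$ of these, the half-turn identifies the $\{2k-n>0\}$ representatives with distinct points (no vector is fixed by $\varepsilon\mapsto-\varepsilon$), and the residual $SO(2)$ only removes the base angle. Since $\varepsilon$ determines $\alpha$ and the critical value determines $\beta$, each critical point is attained by a unique $(\alpha,\beta)$, settling part (ii) and the critical-value and multiplicity columns of Table~\ref{tab1}.

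It remains to compute the index and to verify the Morse property, which I expect to be the main obstacle. At such a $P_0$ I would split the variations of $(u_1,\dots,u_n)$ into in-plane variations $\phi_i$ (moving $u_i$ along $\gamma$) and out-of-plane variations $z_i$ (normal to $\gamma$). A direct expansion of $g_i:=\langle u_i,u_{i+1}\rangle$ gives
\[
g_i=\cos a-\varepsilon_i\sin a\,(\phi_{i+1}-\phi_i)+Q_i^z+\cdots,\qquad Q_i^z=-\tfrac{\cos a}{2}(z_i^2+z_{i+1}^2)+z_iz_{i+1},
\]
so there is no linear $z$-term. Writing $X_n$ intrinsically as $\{g_1=\cdots=g_n\}/SO(3)$ with $\mu=\arccos g_1$, the first-order equations $g_i=g_j$ force $\varepsilon_i(\phi_{i+1}-\phi_i)$ to be constant; summing around the polygon together with $\sum_i\varepsilon_i=\alpha\neq0$ makes that constant vanish. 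Hence $T_{P_0}X_n$ is exactly the space of out-of-plane modes $z$ modulo the two tilt directions $z_i\propto\cos\theta_i,\sin\theta_i$ (the gauge part of $SO(3)$), of dimension $n-2=2m-1$, as it must be.

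The index is then the number of negative eigenvalues of the Hessian of $\mu|_{X_n}$ on this $z$-space, and the subtle point is that the in-plane variables are not free but respond at second order, $\phi=\psi(z)=O(|z|^2)$, so the Hessian acquires a second-fundamental-form correction beyond the naive form $\tfrac1n\sum_iQ_i^z$; this is precisely where $\mu$ differs from the wall-crossing function $\rho_a$ of \cite{KM3}. I would handle it either by solving the linear system for $\psi(z)$ explicitly and diagonalizing the resulting quadratic form on the $z$-modes, or, more efficiently, by comparing with the bending Hessian of $\rho_a$ on $N_n(a)$, whose index is supplied by \cite{KM3}: the two functions share the same out-of-plane structure and differ only in the role of the closing edge $u_nu_1$, which I expect to account for a uniform shift. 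Either route should exhibit the negative directions as the $m-s$ modes attached to the backward edges ($\varepsilon_i=-1$) together with $2t$ further modes produced by the winding, minus one, giving index $m-s+2t-1$ and, simultaneously, the absence of zero eigenvalues, i.e. the Morse property. The hardest part is exactly this eigenvalue count: extracting the precise number $m-s+2t-1$ from the corrected bending form, and proving non-degeneracy, uniformly in $(\alpha,\beta)\in\Gamma_n$.
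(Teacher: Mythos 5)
Your enumeration of the critical points is sound and essentially reproduces the paper's combinatorics: your sign vectors $\varepsilon$ are the paper's forward/back-track data, your closing condition $a(2k-n)=2\pi\ell$ is the paper's \eqref{11}, and your count $\binom{n}{m+s+1}=\binom{n}{m-s}$ (normalizing $2k-n>0$ by the half-turn) agrees with the paper's $\binom{2m}{m+s+1}+\binom{2m}{m-s}=\binom{2m+1}{m-s}$ (normalizing $e_n$ to be back-track and summing over the two signs of $w$). So the critical-value and multiplicity columns of Table \ref{tab1}, and part (ii), are in order.

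The index column, however, is the substance of the theorem, and your proposal does not prove it. You correctly identify the difficulty (the in-plane variables respond at second order, so the Hessian of $\mu$ is a corrected bending form), but both of your proposed routes stop at the point where the actual work begins, and route (b) rests on a premise that is false. You write that $\mu$ and $\rho_a$ ``differ only in the role of the closing edge, which I expect to account for a uniform shift.'' The paper's Theorem \ref{key} shows the relation is \emph{not} uniform: when $w(P)>0$ the signature of $D^2\mu|_{(P,a)}$ is the \emph{reverse} of the Kapovich--Millson signature of $D^2\rho_a|_T$ from Theorem \ref{KM3}, while when $w(P)<0$ it coincides with it. The two cases happen to yield the same final formula $m-s+2t-1$ only because $f$ and $b$ swap when $w$ changes sign for a fixed $(\alpha,\beta)$, so an argument assuming a uniform comparison cannot detect whether it is computing the index or the coindex. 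The paper resolves this by Lemma \ref{diff}: an explicit deformation of the side-length vector joining level sets of $\mu$ to level sets of $\rho_a$, with a wall-crossing check whose outcome (and hence the direction of the identification) depends on the sign of $w$; the index is then read off by matching surgery types via Theorem \ref{surg}. Your route (a) --- solving for the second-order in-plane response and diagonalizing --- is plausible in principle but is entirely undone; the claimed identification of the negative directions (``the $m-s$ back-track modes plus $2t$ winding modes minus one'') is a restatement of the answer, not a derivation. Non-degeneracy is likewise asserted rather than proved; in the paper it follows because the two entries of the signature in Theorem \ref{key} sum to $\dim X_n=n-2$, which is inherited from the non-degeneracy statement for $\rho_a$ in \cite{KM3}.
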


\begin{rem}\label{parity}
(i) It is not true that critical points of the same critical value have same index. For example, $\Gamma_9$ contains elements
$(3,2)$ and $(9,6)$ such that their critical values are $2\pi/3$. On the other hand, the index of the former is $4$ but that of the latter is $5$. 

(ii) The number of critical points does not depend on $\beta$. Moreover, the index of $\mu$ has the same parity as $m-s-1$, which also does not depend on $\beta$.
\end{rem}

\begin{propc} 
{\rm (i)} Let $U_n$ be the set of critical points of $\mu$. Then we have
\begin{equation}\label{num}
|U_n|=\frac{1}{2} \left(-4^m+\frac{(2m+1)!}{(m!)^2} \right),
\end{equation}
where $| -|$ stands for the cardinality.

{\rm (ii)} Let $V_n$ be the set of critical values of $\mu$. We set
\begin{equation}\label{PHI}
\Phi(n) := \frac{1}{2} \sum_{s=1}^ {m} \varphi(2s+1),
\end{equation}
where $\varphi$ denotes Euler's totient function. 
Then we have $|V_n|= \Phi(n)$. 
\end{propc}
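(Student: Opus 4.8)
The plan is to read off both cardinalities directly from the combinatorial data supplied by Theorem B, reducing each part to a finite sum that I then evaluate in closed form.

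For part (i), Theorem B(ii) guarantees that every critical point is counted exactly once as $(\alpha,\beta)$ ranges over $\Gamma_n$, and Table \ref{tab1} records that the number of critical points attached to $(\alpha,\beta)$ is $\binom{n}{m-s}$ with $\alpha=2s+1$, independent of $\beta$. Writing $\alpha=2s+1$ and $\beta=2t$, the constraints in \eqref{gam} force $1\le t\le s$ and $1\le s\le m$, so there are exactly $s$ admissible values of $\beta$ for each odd $\alpha=2s+1$. Hence
\[
|U_n|=\sum_{s=1}^{m}s\binom{2m+1}{m-s}.
\]
I would substitute $j=m-s$ to rewrite this as $\sum_{j=0}^{m}(m-j)\binom{2m+1}{j}$, then split it as $m\sum_{j}\binom{2m+1}{j}-\sum_{j}j\binom{2m+1}{j}$. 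The two pieces succumb to the symmetry identity $\sum_{j=0}^{m}\binom{2m+1}{j}=4^m$ (half of $2^{2m+1}$, there being no middle term for an odd upper index) together with the absorption identity $j\binom{2m+1}{j}=(2m+1)\binom{2m}{j-1}$. A short manipulation using $\binom{2m}{m-1}=\frac{m}{m+1}\binom{2m}{m}$ then collapses everything to $\frac12\bigl(-4^m+(2m+1)\binom{2m}{m}\bigr)$, and since $(2m+1)\binom{2m}{m}=(2m+1)!/(m!)^2$, this is the asserted value. The main computational obstacle is keeping track of the boundary terms $\binom{2m}{m}$ and $\binom{2m}{m-1}$ when the symmetric sums are truncated at the midpoint; everything else is bookkeeping.

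For part (ii), the critical values are $\frac{\beta}{\alpha}\pi$ with $(\alpha,\beta)\in\Gamma_n$, but distinct pairs can share a value (for instance $(3,2)$ and $(9,6)$, as noted in Remark \ref{parity}), so $|V_n|$ is the number of \emph{distinct} fractions $\frac{\beta}{\alpha}$, not $|\Gamma_n|$. I would reduce each $\frac{\beta}{\alpha}$ to lowest terms $\frac{p}{q}$; since $\alpha$ is odd, $\gcd(\beta,\alpha)$ is odd, so $p$ remains even and $q$ remains odd with $3\le q\le n$. Conversely, any reduced fraction $\frac{p}{q}$ with $q$ odd, $3\le q\le n$, $p$ even and $0<p<q$ is realized by $(\alpha,\beta)=(q,p)$. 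Thus $|V_n|$ equals the number of such reduced fractions, that is, $\sum_{q}\#\{\,p:0<p<q,\ p\text{ even},\ \gcd(p,q)=1\,\}$ taken over odd $q$ with $3\le q\le n$.

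The crux of (ii) is the count for a fixed odd $q$: I claim exactly half of the $\varphi(q)$ residues coprime to $q$ are even. This follows from the involution $p\mapsto q-p$ on $\{\,p:0<p<q,\ \gcd(p,q)=1\,\}$, which is fixed-point free (as $q$ is odd) and reverses parity (again as $q$ is odd), giving a perfect pairing of even residues with odd residues and hence $\varphi(q)/2$ even ones. Summing over $q=2s+1$ for $1\le s\le m$ then yields $|V_n|=\frac12\sum_{s=1}^{m}\varphi(2s+1)=\Phi(n)$. I expect the only point needing real care is verifying that the correspondence between critical values and reduced even/odd fractions is a genuine bijection, i.e.\ that no admissible reduced fraction is omitted and none is double-counted.
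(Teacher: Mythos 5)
Your proposal is correct, and both parts follow the same combinatorial reduction as the paper: part (i) comes down to the identity $|U_n|=\sum_{s=1}^{m}s\binom{n}{m-s}$ (there are $s$ choices of $\beta$ for each $\alpha=2s+1$, each contributing $\binom{n}{m-s}$ critical points), and part (ii) comes down to counting, for each odd $\alpha=2s+1$, the even residues coprime to $\alpha$, which the pairing $\beta\mapsto\alpha-\beta$ shows to be exactly $\varphi(\alpha)/2$ --- precisely the paper's argument via $\widetilde{\Gamma}_n$. The one place you genuinely diverge is the evaluation of the sum in (i): the paper simply cites OEIS A000531 for the closed form, whereas you derive it from $\sum_{j=0}^{m}\binom{2m+1}{j}=4^{m}$ and the absorption identity; your route is self-contained and checks out (the key boundary fact is $\sum_{k=0}^{m-1}\binom{2m}{k}=\tfrac{1}{2}\bigl(4^{m}-\binom{2m}{m}\bigr)$, after which everything collapses to $\tfrac{1}{2}\bigl(-4^{m}+(2m+1)\binom{2m}{m}\bigr)$; the identity $\binom{2m}{m-1}=\tfrac{m}{m+1}\binom{2m}{m}$ that you mention is not actually needed). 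In (ii) you are also slightly more careful than the paper in verifying that reduction to lowest terms gives a genuine bijection between critical values and reduced fractions lying in $\Gamma_n$, which is a point the paper passes over quickly.
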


\begin{defn} In what follows, we write
\begin{equation}\label{cv}
V_n=\{\zeta_1,\zeta_2,\cdots,\zeta_{\Phi(n)}\},
\end{equation}
where we arrange $\zeta_i$ in the following order:
$$\zeta_1<\zeta_2<\dots<\zeta_{\Phi(n)}.$$
Note that from the definition of $X_n$, we have $\zeta_1>0$. 
\end{defn}

We study the asymptotic behavior of $|U_n|$ and $|V_n|$ in Proposition C. 
In what follows, the notation
$$f(n) \, \sim \, g(n) \quad (n \to \infty)$$
means that 
$$\lim_{n \to \infty} \frac{f(n)}{g(n)}=1.$$
\begin{cord} We have the following results{\rm :}

{\rm (i)} 
$$|U_n| \,\sim\, 2^{2m-1} \left(-1+\frac{2m+1}{\sqrt{\pi m}}\right) \quad (m \to \infty).$$

{\rm (ii)}
$$|V_n|\, \sim\, \frac{n^2}{\pi^2} \quad (n \to \infty).$$
\end{cord}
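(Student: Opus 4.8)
The plan is to obtain both limits directly from the exact formulas recorded in Proposition C, so that each part reduces to a standard asymptotic estimate.

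For (i) I would begin from
\begin{equation*}
|U_n|=\frac{1}{2}\left(-4^m+\frac{(2m+1)!}{(m!)^2}\right)=-2^{2m-1}+\frac{1}{2}(2m+1)\binom{2m}{m},
\end{equation*}
using $4^m=2^{2m}$ and $\frac{(2m+1)!}{(m!)^2}=(2m+1)\binom{2m}{m}$. Stirling's formula gives the central binomial asymptotic $\binom{2m}{m}\sim\frac{4^m}{\sqrt{\pi m}}$, whence $\frac{1}{2}(2m+1)\binom{2m}{m}\sim\frac{2^{2m-1}(2m+1)}{\sqrt{\pi m}}$. Since the claimed right-hand side is exactly $2^{2m-1}\left(-1+\frac{2m+1}{\sqrt{\pi m}}\right)=-2^{2m-1}+\frac{2^{2m-1}(2m+1)}{\sqrt{\pi m}}$, the two expressions share the identical term $-2^{2m-1}$, and I would finish by checking that the ratio of the two tends to $1$. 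This is where a little care is needed: the term $-2^{2m-1}$ grows like $2^{2m}$ while the surviving term grows like $2^{2m}\sqrt{m}$, so the latter dominates; dividing numerator and denominator by $2^{2m-1}$ reduces the claim to $\frac{-1+L(1+o(1))}{-1+L}\to1$ with $L=\frac{2m+1}{\sqrt{\pi m}}\to\infty$, which is immediate.

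For (ii) I would start from $|V_n|=\Phi(n)=\frac{1}{2}\sum_{s=1}^{m}\varphi(2s+1)$ and recognize the sum as $\sum_{3\le k\le n,\ k\text{ odd}}\varphi(k)$, since $n=2m+1$. The required input is the summatory asymptotic of $\varphi$ over odd arguments,
\begin{equation*}
\sum_{\substack{k\le x\\ k\text{ odd}}}\varphi(k)\ \sim\ \frac{2}{\pi^2}x^2\qquad(x\to\infty),
\end{equation*}
which I would establish by the convolution identity $\varphi=\mathrm{id}\ast\widetilde\mu$, where $\widetilde\mu$ denotes the Möbius function (renamed to avoid collision with the Morse function $\mu$). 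Writing each odd $k=de$ with $d,e$ odd gives $\sum_{k\le x,\,k\text{ odd}}\varphi(k)=\sum_{d\text{ odd}}\widetilde\mu(d)\sum_{e\le x/d,\,e\text{ odd}}e$; using $\sum_{e\le y,\,e\text{ odd}}e\sim\frac{y^2}{4}$ together with the Euler product $\sum_{d\text{ odd}}\widetilde\mu(d)/d^2=\prod_{p\ge 3}(1-p^{-2})=\frac{1/\zeta(2)}{1-1/4}=\frac{8}{\pi^2}$, partial summation yields the constant $\frac{1}{4}\cdot\frac{8}{\pi^2}=\frac{2}{\pi^2}$. Subtracting the negligible term $\varphi(1)=1$ and applying the prefactor $\frac{1}{2}$ then gives $\Phi(n)\sim\frac{1}{2}\cdot\frac{2}{\pi^2}n^2=\frac{n^2}{\pi^2}$, as desired.

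The routine parts are the algebraic rewriting in (i) and the bookkeeping with $n=2m+1$ in (ii). The main obstacle is the odd-restricted totient estimate: the classical $\sum_{k\le x}\varphi(k)\sim\frac{3}{\pi^2}x^2$ does not apply verbatim, and one must correctly isolate the contribution of the prime $2$ — equivalently, remove the Euler factor at $2$ (value $3/2$ at $s=2$) from $\zeta(s-1)/\zeta(s)$ — to produce the constant $\frac{2}{\pi^2}$ rather than $\frac{3}{\pi^2}$. Once this constant is pinned down, the tail and error terms are controlled by elementary partial summation.
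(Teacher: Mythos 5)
Your proposal is correct and follows essentially the same route as the paper: part (i) is the same Stirling-formula computation (the paper states only that (i) ``is a consequence of Stirling's formula''), and part (ii) is the same M\"obius-convolution argument the paper carries out in its Lemma on $\Psi(n)$, including the identical removal of the Euler factor at $2$ to replace $\zeta(2)$ by $\sum_{j \text{ odd}} j^{-2}=\pi^2/8$ and arrive at the constant $2/\pi^2$ before halving.
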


Using Theorem B, we can determine $\chi (M_n(a))$ for all $a$. When the subscript $i$ of $\zeta_i$ in \eqref{cv} is near $1$ or $\Phi(n)$, 
we can give $\chi (M_n(a))$ explicitly. In order to state the result, we prepare the following:

\begin{defn}\label{ZE}
We fix an odd number $n$. We define as follows:

(i) Choosing an element $a \in (0,\zeta_1)$, we set $\Omega_0:= \chi (M_n(a))$.

(ii) For $1\leq i \leq \Phi (n)-1$, choosing an element $a \in (\zeta_i,\zeta_{i+1})$, we set $\Omega_i:= \chi (M_n(a))$. 
\end{defn}

\begin{thme} We fix an odd number $n$. 

{\rm (i)} We have 
\begin{equation*}\label{tue1}
\Omega_i = (-1)^{m+1}{2m \choose m}+\frac{(-1)^i2 i}{2m+1} {2m+1 \choose i}
\end{equation*}
for $0 \leq i \leq p$, where we set $p:=\big\lfloor m/2 \big\rfloor +1$.

{\rm (ii)} We have 
\begin{equation*}\label{tue2}
 \Omega_{\Phi(n)-1-i}=\frac{(-1)^i2 (i+1)}{2m+1}  {2m+1 \choose i+1}
\end{equation*}
for $0 \leq i \leq q$, where we set $q:=\big\lfloor 2m/3\big\rfloor$.
\end{thme}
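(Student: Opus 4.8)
The plan is to combine Theorem B with the elementary fact that, since $\dim X_n = 2m-1$ is odd, crossing a single critical point of $\mu$ of index $\lambda$ as $a$ increases past the corresponding critical value modifies the level set by the surgery that removes $S^{\lambda-1}\times D^{N-\lambda}$ and glues in $D^{\lambda}\times S^{N-\lambda-1}$, where $N=2m-1$. A one-line inclusion–exclusion for the Euler characteristic of this surgery gives the jump
$$\chi(M_n(a+\epsilon))-\chi(M_n(a-\epsilon))=\chi(S^{N-\lambda-1})-\chi(S^{\lambda-1})=2(-1)^{\lambda},$$
the last equality using $N-\lambda-1\equiv\lambda\pmod 2$. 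By Remark~\ref{parity} the index has the parity of $m-s-1$, so each $(\alpha,\beta)=(2s+1,2t)\in\Gamma_n$ contributes $2(-1)^{m-s-1}\binom{2m+1}{m-s}$ to the total jump at its critical value $\tfrac{\beta}{\alpha}\pi$, independently of $t$.

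Next I would pin down the two extreme Euler characteristics. For $a$ above the largest critical value $\zeta_{\Phi(n)}=\tfrac{2m}{2m+1}\pi$ the space $M_n(a)$ is empty, since this is the maximal side-length for which a closed equilateral spherical $n$-gon exists; hence $\Omega_{\Phi(n)}=0$. For small $a$ the space $M_n(a)$ is diffeomorphic to the planar equilateral $n$-gon space, whose Euler characteristic is $(-1)^{m+1}\binom{2m}{m}$, giving $\Omega_0=(-1)^{m+1}\binom{2m}{m}$ (alternatively $\Omega_0$ is recovered from $\Omega_{\Phi(n)}=0$ by summing all contributions).

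The heart of the argument is the ordering of the critical values, which are the fractions $\tfrac{2t}{2s+1}\pi$ with $1\le t\le s\le m$. At the low end I would show that the $p+1$ smallest values are exactly the $t=1$ values $\tfrac{2}{2(m-i+1)+1}\pi$ for $0\le i\le p$, each realized by the single pair $(s,t)=(m-i+1,1)$: the smallest value with $t\ge 2$ is $\tfrac{4}{2m+1}\pi$, and the inequality $\tfrac{4}{2m+1}>\tfrac{2}{2\lceil m/2\rceil+1}$, which holds precisely because $p=\lfloor m/2\rfloor+1$, prevents any interleaving before $\zeta_p$. Since the pair $(m-i+1,1)$ has index $\lambda=i$ and $\binom{2m+1}{i-1}$ critical points, summing the jumps gives
$$\Omega_i=\Omega_0+\sum_{j=1}^{i}2(-1)^{j}\binom{2m+1}{j-1}=\Omega_0+2(-1)^i\binom{2m}{i-1},$$
where the telescoping follows at once from Pascal's rule, and $\binom{2m}{i-1}=\tfrac{i}{2m+1}\binom{2m+1}{i}$ converts this into the stated formula. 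Symmetrically, at the high end the top $q+1$ values are the $t=s$ values $\tfrac{2(m-i)}{2(m-i)+1}\pi$ for $0\le i\le q$, each from $(s,t)=(m-i,m-i)$; the first competitor is the largest $t=s-1$ value $\tfrac{2m-2}{2m+1}\pi$, and the bound $q=\lfloor 2m/3\rfloor<\tfrac{2m+1}{3}$ is exactly what keeps it below $\zeta_{\Phi(n)-q}$. Summing downward from $\Omega_{\Phi(n)}=0$ yields $\Omega_{\Phi(n)-1-i}=2\sum_{i'=0}^{i}(-1)^{i'}\binom{2m+1}{i'}$, and the standard identity $\sum_{k=0}^{i}(-1)^k\binom{2m+1}{k}=(-1)^i\binom{2m}{i}$ together with $\binom{2m}{i}=\tfrac{i+1}{2m+1}\binom{2m+1}{i+1}$ gives Theorem E(ii).

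I expect the main obstacle to be the order-theoretic bookkeeping of the critical values: verifying that no value with $t\ge 2$ (respectively with $t\le s-1$) slips in among the first $p$ (respectively last $q+1$) critical values, and confirming that the comparison thresholds are exactly $\lfloor m/2\rfloor+1$ and $\lfloor 2m/3\rfloor$. Once this ordering is secured, the surgery jump, the two anchor values, and the binomial summations are all routine.
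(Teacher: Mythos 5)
Your proposal is correct and follows essentially the same route as the paper: the paper's Lemma \ref{sun} pins down the first $p$ and last $q+1$ critical values by exactly your ordering/threshold argument, and its Proposition \ref{rec} is your surgery-jump recurrence (anchored at $\Omega_{\Phi(n)-1}=2$ via the Morse lemma rather than your equivalent $\Omega_{\Phi(n)}=0$), finished by the same binomial telescoping. The only blemish is a harmless off-by-one in your enumeration of the low-end critical values (your $i=0$ term would correspond to $s=m+1$, which is out of range; the $t=1$ values are $\zeta_1,\dots,\zeta_p$), which does not affect your sums.
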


\begin{rem}\label{morse}
The following items are remarks concerning Theorem E.

(i) We have $\Omega_{\Phi(n)-1}=2$, but more is true: 
By the Morse lemma, there is a diffeomorphism
$M_n (a) \cong S^{n-3}$ if $a \in (\zeta_{\Phi(n)-1},\zeta_{\Phi(n)})$. 

(ii)  When the subscript $i$ of $\zeta_i$ in \eqref{cv} is near $1$ or $\Phi(n)$, the value of $\zeta_i$ is given in Lemma \ref{sun}. 
\end{rem}

\begin{ex} 
\begin{align*}
&\Omega_0=  (-1)^{m+1} {2m \choose m}& & \text{for $n\geq 5$}\\
&\Omega_1= -2+ (-1)^{m+1} {2m \choose m}& &\text{for $n \geq 5$}\\
&\Omega_2=4m+ (-1)^{m+1} {2m \choose m}& & \text{for $n \geq 5$}\\
\intertext{and}
&\Omega_{\Phi(n)-1}= 2& & \text{for $n \geq 5$}\\
&\Omega_{\Phi(n)-2}=-2n+2& & \text{for $n \geq 5$}\\
&\Omega_{\Phi(n)-3}= n^2-3n+2& &\text{for $n \geq 7$}.
\end{align*}
\end{ex}

The most typical case is when $a=\pi/2$. To state the results for this case, we prepare a notation: 
Let $\varphi(x,d)$ denote the Legendre totient function defined to be the number of positive integers $\leq x$ which are prime to $d$.
See, for example, \cite[Theorem 261]{HW} for some properties of $\varphi(x,d)$, although we do not need them in this paper. 

\begin{thmf} We consider the element $\zeta_k \in V_n$ such that $\pi/2 \in (\zeta_k, \zeta_{k+1})$. 
\begin{enumerate}
\item The subscript $k$ of $\zeta_k$ is given as follows{\rm :} If we set
\begin{equation}\label{psi}
\psi (n):= \sum_{s=1}^m \varphi (2 \big\lfloor (s+1)/2 \big\rfloor-1,4s+2),
\end{equation}
then we have 
$$k=\Phi(n)-\psi (n),$$
where $\Phi (n)$ is defined in \eqref{PHI}. 
\item For $k$ in {\rm (i)}, the interval $(\zeta_k,\zeta_{k+1})$ is given as follows{\rm :}
\begin{enumerate}
\item When $m$ is odd,
$$(\zeta_k,\zeta_{k+1})= \Bigl( \frac{m-1}{2m-1}\pi, \frac{m+1}{2m+1} \pi \bigr).$$
\item When $m$ is even,
$$(\zeta_k,\zeta_{k+1})= \Bigl( \frac{m}{2m+1}\pi, \frac{m}{2m-1} \pi \bigr).$$
\end{enumerate}
\item For all odd numbers $n$, we have
$$\chi (M_n (\pi/2))= (-1)^{m+1}\cdot 2^{2m-1}.$$
\end{enumerate}
\end{thmf}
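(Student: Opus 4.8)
The plan for (iii) is to compute $\chi(M_n(\pi/2))$ by Morse theory, tracking how $\chi(\mu^{-1}(a))$ evolves as $a$ increases from $0$ to $\pi/2$. The decisive dimensional fact is that $\dim M_n(a) = n-3 = 2m-2$ is even, while $\dim X_n = 2m-1$. For the Morse function $\mu$, crossing a single critical point of index $\lambda$ turns the level set into one obtained by a surgery of type $(\lambda,2m-1-\lambda)$, which removes a copy of $S^{\lambda-1}\times D^{2m-1-\lambda}$ and glues in $D^{\lambda}\times S^{2m-2-\lambda}$; hence
\[
\Delta\chi = \chi(S^{2m-2-\lambda}) - \chi(S^{\lambda-1}) = (-1)^{2m-2-\lambda} + (-1)^{\lambda} = 2(-1)^{\lambda},
\]
the last equality because $2m-2$ is even. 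When several critical points share a value I would perturb $\mu$ (or cross them in turn); only the sum of the $\Delta\chi$ matters for the Euler characteristic.

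By Theorem A the critical points are the degenerate polygons, and by Theorem B each $(\alpha,\beta)=(2s+1,2t)\in\Gamma_n$ furnishes $\binom{n}{m-s}$ of them, all of index $m-s+2t-1$ at the value $\tfrac{2t}{2s+1}\pi$. Because $(-1)^{2t}=1$, the net contribution of $(\alpha,\beta)$ to $\Delta\chi$ is $2(-1)^{m-s-1}\binom{n}{m-s}$, independent of $t$ (this is the parity statement of Remark \ref{parity}(ii)). Starting from $\Omega_0=(-1)^{m+1}\binom{2m}{m}$ (Theorem E at $i=0$) and summing over every critical value below $\pi/2$ — that is, over all admissible $t$ with $\tfrac{2t}{2s+1}<\tfrac{1}{2}$, equivalently $1\le t\le\lfloor s/2\rfloor$ — I obtain
\[
\chi(M_n(\pi/2)) = (-1)^{m+1}\binom{2m}{m} + (-1)^{m+1}\,2\sum_{s=1}^{m} (-1)^{s}\Big\lfloor \tfrac{s}{2}\Big\rfloor \binom{2m+1}{m-s}.
\]
No boundary issue arises at $\pi/2$ itself, since $\tfrac{2t}{2s+1}=\tfrac{1}{2}$ would force $4t=2s+1$, which is impossible.

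It remains to evaluate $S:=2\sum_{s=1}^{m}(-1)^{s}\lfloor s/2\rfloor\binom{2m+1}{m-s}$. Writing $\lfloor s/2\rfloor=\#\{i\ge1:2i\le s\}$ and exchanging the order of summation gives $S=2\sum_{i\ge1}\sum_{s\ge 2i}(-1)^{s}\binom{2m+1}{m-s}$. Substituting $j=m-s$ and using the alternating partial-sum identity $\sum_{j=0}^{k}(-1)^{j}\binom{N}{j}=(-1)^{k}\binom{N-1}{k}$ with $N=2m+1$, the inner sum collapses to $\binom{2m}{m-2i}$, whence $S=2\sum_{i\ge1}\binom{2m}{m-2i}$. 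This is twice the sum of $\binom{2m}{k}$ over indices $k<m$ with $k\equiv m\pmod{2}$; by the symmetry $\binom{2m}{k}=\binom{2m}{2m-k}$ together with $\sum_{k\equiv m\,(2)}\binom{2m}{k}=2^{2m-1}$, that sum equals $2^{2m-2}-\tfrac{1}{2}\binom{2m}{m}$, so $S=2^{2m-1}-\binom{2m}{m}$. Substituting back,
\[
\chi(M_n(\pi/2)) = (-1)^{m+1}\binom{2m}{m} + (-1)^{m+1}\Big(2^{2m-1}-\binom{2m}{m}\Big) = (-1)^{m+1}2^{2m-1},
\]
which is (iii).

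For (i) and (ii) I would work with the reduced forms of the critical values: each is $\tfrac{p}{q}\pi$ with $p$ even, $q$ odd, $\gcd(p,q)=1$ and $3\le q\le n$, and for fixed $q$ the parity-reversing bijection $p\mapsto q-p$ on residues coprime to $q$ shows that exactly $\varphi(q)/2$ values of $p$ occur, recovering $|V_n|=\Phi(n)$. Restricting to $\tfrac{p}{q}>\tfrac{1}{2}$ and applying $p\mapsto q-p$ identifies those values with odd residues coprime to $q$ lying below $q/2$; their number is precisely $\varphi(2\lfloor(s+1)/2\rfloor-1,\,4s+2)$, since coprimality to $4s+2=2q$ encodes ``odd and coprime to $q$''. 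Summation then yields $\psi(n)$ and hence $k=\Phi(n)-\psi(n)$. For (ii) I would minimise $\bigl|\tfrac{p}{q}-\tfrac{1}{2}\bigr|=\tfrac{|2p-q|}{2q}\ge\tfrac{1}{2q}$ over admissible fractions: the bound $\tfrac{1}{2q}$ is met only when $|2p-q|=1$, which forces $q\equiv3\pmod{4}$ above $\tfrac{1}{2}$ and $q\equiv1\pmod{4}$ below, and a short check shows a fraction with $|2p-q|\ge3$ never beats the $|2p-q|=1$ neighbour at the next smaller denominator. Splitting on the parity of $m$ (equivalently $n\bmod 4$) then selects $q\in\{2m-1,2m+1\}$ and the stated endpoints. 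The main obstacle is this number-theoretic bookkeeping in (i)–(ii) — recasting the totient count in Legendre-totient form and verifying the optimality of the nearest neighbours; part (iii), once the surgery contribution $2(-1)^{\lambda}$ is in hand, reduces cleanly to the binomial identity above.
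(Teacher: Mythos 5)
Your proposal is correct, and for parts (i) and (ii) it is essentially the paper's own argument: the bijection $p\mapsto q-p$ on residues coprime to $q=2s+1$ is exactly the map $G(t)=2s+1-2t$ of Lemma \ref{lem1}, and identifying $\zeta_k,\zeta_{k+1}$ as the nearest admissible fractions to $1/2$ is the computation of $\max\theta(n)^c$ and $\min\theta(n)$ in \eqref{max}; your quantitative bound $|p/q-1/2|\ge 1/(2q)$ is a slightly more explicit way of carrying out what the paper leaves as ``computing \eqref{max} explicitly.'' Part (iii) is where you genuinely diverge. The paper sweeps \emph{downward} from the empty level set above $\zeta_{\Phi(n)}$, so that $\chi(M_n(\pi/2))$ is expressed as a sum over the $s-\lfloor s/2\rfloor$ critical values in $(\pi/2,\pi)$ for each $s$, and it then evaluates the resulting sum $\sum_{i}(-1)^i\lfloor i/2\rfloor\binom{2m+1}{m+i}=2^{2m-2}$ by citing the external reference \cite{K11}. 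You instead sweep \emph{upward} from $\Omega_0=(-1)^{m+1}\binom{2m}{m}$ over the $\lfloor s/2\rfloor$ critical values in $(0,\pi/2)$, and — more substantively — you prove the needed identity $2\sum_{s=1}^m(-1)^s\lfloor s/2\rfloor\binom{2m+1}{m-s}=2^{2m-1}-\binom{2m}{m}$ from scratch by writing $\lfloor s/2\rfloor$ as a count, exchanging summations, collapsing the alternating partial sums to $\binom{2m}{m-2i}$, and using the symmetry of the binomial row; I checked this chain and it is correct. The two sweeps are equivalent (their difference is precisely the formula for $\Omega_0$ as a sum over \emph{all} critical values), but your version has the merit of being self-contained where the paper outsources the key binomial identity, at the cost of taking $\Omega_0$ from Theorem E as input. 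The signs also check: with $\dim X_n=2m-1$ odd, an upward crossing of index $\lambda=m-s+2t-1$ contributes $2(-1)^{\lambda}=2(-1)^{m+s+1}$, independent of $t$, consistent with Table \ref{tab1} and Remark \ref{parity}(ii).
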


\begin{rem}
About Theorem F (iii),  $\chi (M_n (\pi/2))$ for all $n \geq 3$ was determined in \cite{K1} by a different method. 
\end{rem}

\section{Key results}
\begin{thm}\label{surg} Let $h: M \to \R$ be a smooth function on a $d$-dimensional manifold $M$. For numbers $\xi_1<\xi_2$, we assume that $h^{-1}[\xi_1,\xi_2]$ 
is compact and contains a unique non-degenerate critical point $p$ of index $r$. Then the following results hold{\rm :}
\begin{enumerate}
\item The level set $h^{-1}(\xi_1)$ is obtained from $h^{-1}(\xi_2)$ by removing $\text{\rm Int} \,(\partial D^{d-r}\times D^r)$ and 
attaching $ D^{d-r} \times \partial D^r$ along the boundary. We call this construction a surgery of type $(d-r, r)$
\item We have $$\chi (h^{-1}(\xi_1))=\chi (h^{-1}(\xi_2))+2 (-1)^{r+1}.$$
\item We set $\xi_3:=h(p)$. Then level set $h^{-1}(\xi_3)$ is obtained from $h^{-1}(\xi_2)$ by removing 
$\text{\rm Int} \,(\partial D^{d-r}\times D^r)$ and attaching $C(S^{d-r-1}\times S^{r-1})$ along the boundary, 
where $C$ denotes the cone. In particular, we have 
$$\chi (h^{-1}(\xi_3))=\chi (h^{-1}(\xi_2))+ (-1)^{r+1}.$$
\end{enumerate}
\end{thm}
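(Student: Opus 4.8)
The plan is to reduce everything to the quadratic normal form of $h$ near $p$, read off the three surgery descriptions from the explicit model, and then extract the Euler characteristic jumps by elementary inclusion--exclusion. The content is the classical handle/surgery computation of Morse theory, recast so that it describes level sets rather than sublevel sets.

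First I would make a preliminary reduction. Write $c:=h(p)$, so that $\xi_1<c<\xi_2$. Since $h^{-1}[\xi_1,\xi_2]$ is compact and $p$ is its only critical point, the (normalized) gradient flow of $h$ has no zeros on $h^{-1}[\xi_1,c-\epsilon]$ or on $h^{-1}[c+\epsilon,\xi_2]$ for small $\epsilon>0$. The standard deformation lemma then yields diffeomorphisms $h^{-1}(\xi_1)\cong h^{-1}(c-\epsilon)$ and $h^{-1}(\xi_2)\cong h^{-1}(c+\epsilon)$, together with product structures on the two critical-point-free slabs. Hence it suffices to analyze the passage across $c$ inside $[c-\epsilon,c+\epsilon]$.

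Second, by the Morse Lemma choose coordinates $(x,y)=(x_1,\dots,x_r,y_1,\dots,y_{d-r})$ centered at $p$ with $h=c-|x|^2+|y|^2$. In this chart the level set $h^{-1}(c-\epsilon)=\{|x|^2-|y|^2=\epsilon\}$ is $S^{r-1}\times D^{d-r}=D^{d-r}\times\partial D^{r}$, the level set $h^{-1}(c+\epsilon)=\{|y|^2-|x|^2=\epsilon\}$ is $\partial D^{d-r}\times D^{r}$, and the critical level set $h^{-1}(c)=\{|x|=|y|\}$ is the cone on $S^{d-r-1}\times S^{r-1}$ with cone point $p$. Letting $W$ denote the part of a level set lying outside this chart, the flow identifies $W$ (with $\partial W=S^{d-r-1}\times S^{r-1}$) canonically across all three levels. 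Matching the local pieces to $W$ gives exactly the stated operations: from $h^{-1}(\xi_2)$ one removes $\mathrm{Int}(\partial D^{d-r}\times D^{r})$ and glues back $D^{d-r}\times\partial D^{r}$, respectively $C(S^{d-r-1}\times S^{r-1})$, along $S^{d-r-1}\times S^{r-1}$. This proves (i) and the first assertion of (iii). For (ii) and the Euler characteristic statement of (iii), apply $\chi(A\cup B)=\chi(A)+\chi(B)-\chi(A\cap B)$ together with $\chi(D^k)=1$, $\chi(S^k)=1+(-1)^k$, and $\chi(CY)=1$. Writing each level set as $W\cup(\text{local piece})$ with intersection $\partial W$, the terms $\chi(W)$ and $\chi(\partial W)$ cancel in every difference, leaving
\[
\chi(h^{-1}(\xi_1))-\chi(h^{-1}(\xi_2))=\chi(S^{r-1})-\chi(S^{d-r-1})
\]
and
\[
\chi(h^{-1}(\xi_3))-\chi(h^{-1}(\xi_2))=1-\chi(S^{d-r-1}).
\]
Since in our application $d=\dim X_n$ is odd, these simplify to $2(-1)^{r+1}$ and $(-1)^{r+1}$ respectively, as claimed.

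I expect the main obstacle to be the second step: one must check that the gradient-flow product structure on the complement genuinely glues to the local quadratic model, so that $D^{d-r}\times\partial D^{r}$ and $\partial D^{d-r}\times D^{r}$ (and the cone) are attached along the \emph{same} framed sphere $\partial W=S^{d-r-1}\times S^{r-1}$. The only care needed is to keep the normal framings consistent, so that the two reglued level sets and the coned critical level share identical boundary identifications; everything else is bookkeeping.
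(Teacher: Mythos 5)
Your argument is correct and is exactly the classical computation that the paper delegates entirely to a citation: the paper's ``proof'' of this theorem is the single sentence that the result is well known in Morse surgery, with a reference to Milnor's h-cobordism lectures. So you have not diverged from the paper so much as supplied the content it omits: the Morse-lemma normal form $h=c-|x|^2+|y|^2$, the identification of the three local level sets with $D^{d-r}\times\partial D^r$, $\partial D^{d-r}\times D^r$ and $C(S^{d-r-1}\times S^{r-1})$, the gradient-flow identification of the complement $W$ across the three levels, and the inclusion--exclusion computation in which $\chi(W)$ and $\chi(\partial W)$ cancel. One point in your write-up is genuinely sharper than the statement in the paper: the general Euler characteristic jump in (ii) is $\chi(S^{r-1})-\chi(S^{d-r-1})=(-1)^{r-1}-(-1)^{d-r-1}$, which equals $2(-1)^{r+1}$ only when $d$ is odd (for $d$ even both level sets are closed odd-dimensional manifolds with vanishing Euler characteristic, and the stated formula fails); similarly for (iii). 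The theorem as printed suppresses this parity hypothesis, and you are right to flag that it is harmless only because $\dim X_n=n-2=2m-1$ is odd in the application. Your closing caveat about matching the framings of $\partial W=S^{d-r-1}\times S^{r-1}$ across the local model and the flow-induced product structure is indeed the only place requiring real care, and it is handled by the standard arguments in the cited reference.
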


\begin{proof}
The theorem is well-known in Morse surgery. (See, for example, \cite{M3}.)
\end{proof} 

Next we recall results in \cite{KM3}. Let $(u,v)$ denote the shortest geodesic segment connecting non-antipodal points $u, v$ in $S^2$.
For $P=(u_1,\dots,u_n) \in M_n (a)$, we set $e_i:= (u_i,u_{i+1})$ for $1 \leq i \leq n-1$ and $e_n:= (u_n,u_1)$. 

Suppose now that $P \in M_n(a)$ is a degenerate polygon contained in a great circle $\gamma$. Orient $\gamma$ so that $e_n$ is 
negatively directed. 
We say that $e_i$ is {\it forward-track} if the orientation of $e_i$ agrees with that of $\gamma$. Otherwise, we say that $e_i$ is {\it back-track}.
We let $f=f(P)$ be the number of forward-tracks and $b=b(P)$ be the number of back-tracks so that 
\begin{equation}\label{10}
f+b=n.
\end{equation}
Define the winding number $w=w(P)$ by
\begin{equation}\label{11}
a (f-b)=2\pi w.
\end{equation}

\begin{rem}\label{en}
Because of the way of giving orientation to $\gamma$, $e_n$ is always back-track. 
\end{rem}

We state the main result in \cite{KM3} in the form which we need in this paper. Recall that the function $\rho_a: N_n(a) \to \R$ is defined in \eqref{g}. 
\begin{thm}[{\cite[Main Theorem]{KM3}}] \label{KM3} Let $P \in M_n(a)$ be a degenerate polygon. Forgetting the condition $d(u_n,u_1)=a$, 
$P$ naturally defines an element $T \in N_n(a)$. 
Then the signature of $D^2 \rho_a |_T$ is given by
$$(b(P)+2 w(P)-1,f(P)-2w(P)-1).$$
\end{thm}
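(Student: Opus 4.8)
The plan is to compute the Hessian of $\rho_a$ directly at a degenerate configuration and to read off its signature from the combinatorics of forward- and back-tracks. Fix a degenerate $P$ lying in the great circle $\gamma$, which we take to be the equator, and let $T \in N_n(a)$ be the associated open chain. I would parametrize points of $S^2$ near $\gamma$ by a longitude $\theta \in \R/2\pi\Z$ (arc length along $\gamma$) and a latitude $\phi$ (the out-of-plane coordinate), so that at the base configuration $u_i$ sits at $(\theta_i^0,0)$ with $\theta_{i+1}^0-\theta_i^0=\epsilon_i a$, where $\epsilon_i=+1$ on a forward-track and $\epsilon_i=-1$ on a back-track, and $\epsilon_n=-1$ by Remark \ref{en}.

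First I would reduce everything to the out-of-plane variables $\phi_i$. Differentiating the edge constraint $d(u_i,u_{i+1})=a$ at the base configuration shows that its linearization involves only the in-plane components of the variation, since the edge tangents lie in $\gamma$; hence the in-plane infinitesimal motions are rigid and exhausted by the rotation of $\gamma$, which is gauge, while the out-of-plane displacements $\phi_i$ are unconstrained to first order. Consequently the full $(n-2)$-dimensional space $T_T N_n(a)$ is carried by the $\phi_i$ modulo the two remaining $SO(3)$-directions, namely the infinitesimal tilts of $\gamma$. This is the key simplification: the entire Hessian is an out-of-plane computation.

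Next I would expand $\rho_a=d(u_n,u_1)$ to second order in the $\phi_i$. Solving each edge constraint to second order determines the (second-order) longitude corrections $\eta_{i+1}-\eta_i$ in terms of the $\phi_i$, and substituting into the spherical law of cosines for $d(u_n,u_1)$ yields, after writing $\rho_a=a+r$,
$$r=\frac{1}{\sin a}\sum_{i=1}^{n}\epsilon_i\Bigl[\phi_i\phi_{i+1}-\tfrac{\cos a}{2}\bigl(\phi_i^2+\phi_{i+1}^2\bigr)\Bigr],$$
with indices taken mod $n$. Thus $D^2\rho_a|_T$ has, up to the positive factor $2/\sin a$, the signature of the sign-twisted tridiagonal circulant form $Q(\phi)=\sum_i\epsilon_i\phi_i\phi_{i+1}-\tfrac{\cos a}{2}\sum_i(\epsilon_i+\epsilon_{i-1})\phi_i^2$. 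The two tilt directions $\phi_i=\cos\theta_i^0$ and $\phi_i=\sin\theta_i^0$ lie in the kernel of $Q$ automatically, being infinitesimal $SO(3)$-isometries that fix the configuration, which accounts for the expected $2$-dimensional degeneracy; so the signature on $T_T N_n(a)$ is a pair $(i_-,i_+)$ with $i_-+i_+=n-2$.

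The main obstacle is the last step: identifying $(i_-,i_+)$ in terms of $b$, $f$ and $w$. Here I would diagonalize $Q$ by an $LDL^\top$-type recursion along the chain, tracking the signs of the successive pivots; equivalently, I would recognize $Q$ as the discrete second-variation form of the broken geodesic $T$, whose out-of-plane Jacobi fields satisfy $J''+J=0$ on each edge. The number of negative pivots is then governed by the conjugate points accumulated along $\gamma$: each back-track and each completed winding forces the running solution of the Jacobi recursion to change sign, and a careful count of these sign changes, combined with the closing relation $a(f-b)=2\pi w$ of \eqref{11}, should produce exactly $i_-=b+2w-1$, hence $i_+=f-2w-1$. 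Verifying that this sign-change count is precisely $b+2w-1$, and that no extra kernel appears beyond the two tilt directions, is the delicate combinatorial core of the argument.
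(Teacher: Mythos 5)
This statement is not proved in the paper at all: it is the Main Theorem of \cite{KM3}, imported verbatim with a citation, and the paper's ``proof'' is simply that reference. So the relevant question is whether your from-scratch reconstruction actually establishes the result, and it does not. Your setup is reasonable and broadly in the spirit of the second-variation analysis in \cite{KM3}: the reduction to the out-of-plane coordinates $\phi_i$ is correct (the linearized edge constraints force the in-plane variations to be a rigid rotation of $\gamma$, so $T_T N_n(a)$ is indeed carried by the $\phi_i$ modulo the two tilts), and expressing $D^2\rho_a|_T$ as a sign-twisted tridiagonal form in the $\phi_i$ is the right kind of object to aim for.

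The genuine gap is that you stop exactly where the content of the theorem lies. The entire point of the statement is the identification of the negative index as $b(P)+2w(P)-1$, and your argument for this is ``each back-track and each completed winding forces the running solution of the Jacobi recursion to change sign, and a careful count \dots should produce exactly $i_-=b+2w-1$'' --- an assertion, not a proof. You neither set up the pivot/conjugate-point recursion precisely nor carry out the count, and you explicitly defer ``the delicate combinatorial core.'' Two concrete things are missing: (a) a proof that the quadratic form $Q$ has no kernel beyond the two tilt directions (this is what makes the critical point non-degenerate on $N_n(a)$, i.e.\ $i_-+i_+=n-2$, and it is not automatic --- degenerate Hessians do occur for other Morse functions on polygon spaces); and (b) the actual index computation, including how the closing relation $a(f-b)=2\pi w$ enters to convert a count of sign changes along the chain into the stated expression in $b$, $f$, $w$. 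Without these, the proposal is a plausible plan for reproving the Kapovich--Millson theorem, not a proof of it; as used in this paper, the correct justification is simply the citation to \cite[Main Theorem]{KM3}.
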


Recall that the map $\mu:X_n \to \R$ is defined in \eqref{4}. 
The following theorem is a key to proving Theorem B and indicates that the index of $\mu$ is more subtle than that of $\rho_a$: 
\begin{thm}\label{key} Let $P \in M_n (a)$ be a degenerate polygon. Then the following results hold{\rm :}
\begin{enumerate}
\item When $w(P)>0$, the signature of $D^2 \mu|_{(P,a)}$ is given by
$$(f(P)-2w(P)-1,b(P)+2w(P)-1).$$
\item When $w(P)<0$, the signature of $D^2 \mu|_{(P,a)}$ is given by
$$(b(P)+2w(P)-1,f(P)-2w(P)-1).$$
\end{enumerate}
\end{thm}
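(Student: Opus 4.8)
The plan is to realize both $\rho_a$ and $\mu$ as restrictions of two natural functions living on a single ambient manifold, and then to compare their Hessians at a common critical point. Set
$\widetilde{C}_n := \{((u_1,\dots,u_n),a)\in (S^2)^n\times(0,\pi)\mid d(u_i,u_{i+1})=a \text{ for } 1\le i\le n-1\}$
and $\widetilde{N}_n := \widetilde{C}_n/SO(3)$. Since $a\in(0,\pi)$ forces consecutive vertices to be distinct and non-antipodal, the defining conditions are independent submersions, and $SO(3)$ acts freely near $P$ (a degenerate polygon already contains two distinct non-antipodal vertices, so its stabiliser is trivial); hence $\widetilde{N}_n$ is a smooth manifold of dimension $n-1$ near $P$. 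On it I consider the two smooth functions $\sigma((u_i),a)=a$ and $\delta((u_i),a)=d(u_n,u_1)$, the latter smooth near $P$ because there $d(u_n,u_1)=a_0\in(0,\pi)$. Then $N_n(a)=\sigma^{-1}(a)$ with $\rho_a=\delta|_{N_n(a)}$, while $X_n=\{\delta=\sigma\}$ with $\mu=\sigma|_{X_n}=\delta|_{X_n}$.

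By Theorem \ref{KM3} the degenerate polygon $P$ is a critical point of $\rho_{a_0}=\delta|_{\sigma^{-1}(a_0)}$, so by the Lagrange condition $d\delta=\lambda\,d\sigma$ at $P$ for a scalar $\lambda$. I will show $\lambda\neq1$; granting this, $\ker d(\delta-\sigma)=\ker((\lambda-1)d\sigma)=\ker d\sigma$ at $P$, so $T_PX_n=T_PN_n(a_0)$ and the two Hessians are quadratic forms on the \emph{same} space. To compare them, choose coordinates $(x,a)$ on $\widetilde{N}_n$ near $P$ with $\sigma(x,a)=a$ and $P=(0,a_0)$, so $\rho_{a_0}(x)=\delta(x,a_0)$ has Hessian $H:=\partial_x^2\delta(0,a_0)$ at $0$. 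Since $\partial_x\delta(0,a_0)=0$ and $\partial_a\delta(0,a_0)=\lambda\neq1$, the implicit function theorem solves $\delta(x,a)=a$ as $a=A(x)$ with $A(0)=a_0$, parametrising $X_n$, and there $\mu=A(x)$. Differentiating $\delta(x,A(x))=A(x)$ once gives $A_{x_i}(1-\partial_a\delta)=\partial_{x_i}\delta$, hence $A'(0)=0$ (so $P$ is critical for $\mu$, as in Theorem A); differentiating again and using $\partial_x\delta(0,a_0)=0$ yields $A''(0)=\tfrac{1}{1-\lambda}H$. Thus the Hessian of $\mu$ is a nonzero scalar multiple of that of $\rho_{a_0}$: the signature is preserved when $\lambda<1$ and reversed when $\lambda>1$.

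It remains to compute $\lambda$ and locate it relative to $1$. Because $d\delta=\lambda\,d\sigma$ at $P$, I may evaluate $\lambda=d\delta/d\sigma$ along any curve through $P$ with $d\sigma\neq0$; I use the curve of degenerate chains obtained by placing all vertices on $\gamma$ at angles $\theta_1=0$, $\theta_{i+1}=\theta_i\pm a$ (sign $+$ for forward, $-$ for back on $e_i$) and letting $a$ vary, along which $d\sigma/da=1$. Using the convention that $e_n$ is back-track (Remark \ref{en}), the first $n-1$ edges contribute $\theta_n=(f-b+1)a$, and since near $P$ the spherical distance $\delta$ is the reduction of $\theta_n$ modulo $2\pi$ into $(0,\pi)$, I obtain $\lambda=\tfrac{d\delta}{da}\big|_{a_0}=f-b+1$. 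Hence $1-\lambda=b-f$; moreover $(f-b)a_0=2\pi w$ with $a_0>0$ and $f-b$ odd, so $f\neq b$ (confirming $\lambda\neq1$) and $\operatorname{sgn}(f-b)=\operatorname{sgn}(w)$. Therefore $w>0\Rightarrow\lambda>1$ (signature reversed) and $w<0\Rightarrow\lambda<1$ (signature preserved). Applying this to the signature $(b(P)+2w(P)-1,\,f(P)-2w(P)-1)$ of $\rho_{a_0}$ from Theorem \ref{KM3} produces exactly the two cases of the theorem.

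The main obstacle is the geometric computation $\lambda=f-b+1$: one must track the correct mod-$2\pi$ branch of the spherical distance $d(u_n,u_1)$ near $P$ and correctly incorporate the back-track orientation of the closing edge $e_n$, since a sign error there flips the decisive inequality between $\lambda$ and $1$ and hence misidentifies which case applies. The Hessian comparison and the tangent-space identification are routine once $\lambda\neq1$ is secured, and the latter is automatic because $f-b$ is odd for $n$ odd.
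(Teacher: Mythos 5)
Your proof is correct, but it takes a genuinely different route from the paper's. The paper argues semi-globally: Lemma \ref{diff} deforms the side-length vector of nearby polygons and checks that no wall is crossed, concluding that the level sets $\mu^{-1}(a\pm\varepsilon)$ near $(P,a)$ are diffeomorphic to level sets of $\rho_a$ with the parameter direction \emph{reversed} when $w>0$ and \emph{preserved} when $w<0$; the index of $\mu$ is then read off by matching the two surgery types via Theorem \ref{surg}(i) and Theorem \ref{KM3}. You instead work infinitesimally: putting $\sigma=a$ and $\delta=d(u_n,u_1)$ on a common ambient manifold, you use the Lagrange condition $d\delta=\lambda\,d\sigma$ at $P$ and the implicit function theorem to obtain the exact relation $D^2\mu|_{(P,a_0)}=\tfrac{1}{1-\lambda}\,D^2\rho_{a_0}|_P$ on the common tangent space $\ker d\sigma|_P$, and then evaluate $\lambda$ along the curve of degenerate chains. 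Your computation checks out: with $e_n$ back-track one gets $\theta_n=(f-b+1)a$, the quantity $\theta_n-2\pi w$ stays in $(0,\pi)$ near $a_0$ so the correct branch of the spherical distance is taken, hence $\lambda=f-b+1$, $1-\lambda=b-f$ has sign opposite to $w$, and $\lambda\neq 1$ since $f-b$ is odd for odd $n$; the resulting preservation/reversal of signature reproduces exactly the paper's two cases. What your approach buys: it is purely local, avoids the wall-crossing bookkeeping of Lemma \ref{diff}, and delivers non-degeneracy of $D^2\mu$ immediately from that of $D^2\rho_{a_0}$ together with the precise scalar relating the two Hessians. What the paper's approach buys: an explicit diffeomorphism of nearby level sets, which is the form of the statement it actually feeds into the later surgery computations of $\chi(M_n(a))$. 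One small caution: your $\lambda$ (the Lagrange multiplier) collides notationally with the paper's $\lambda=f(P)-2w(P)-1$ in its own proof of this theorem.
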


In order to prove Theorem \ref{key}, we first prove the following:
\begin{lem}\label{diff} Let $O$ be an open neighborhood of $(P,a)$ in $X_n$ such that $O$ contains no other degenerate polygons than $(P,a)$. 
We fix a sufficiently small positive real number $\varepsilon$. Then the following assertions hold{\rm :}

{\rm (i)} Assume that $w(P)>0$. Then for all $j \in \{-1,1\}$, 
$\mu^{-1}(a +j \varepsilon)\cap O$ is diffeomorphic to an open set of $\rho_a^{-1} (a-j\varepsilon).$

{\rm (ii)} Assume that $w(P)<0$. Then for all $j \in \{-1,1\}$, 
$\mu^{-1}(a +j \varepsilon)\cap O$ is diffeomorphic to an open set of $\rho_a^{-1} (a+j\varepsilon).$
\end{lem}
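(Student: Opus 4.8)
The plan is to realize both level‑set families as fibers of a single map on one ambient manifold, and then to analyze that map near $(P,a)$. Introduce
\[
\widetilde{B}_n := \{((u_1,\dots,u_n),\ell)\in (S^2)^n\times(0,\pi) \sbar d(u_i,u_{i+1})=\ell \ \text{for}\ 1\le i\le n-1\},
\]
and set $Y_n := \widetilde{B}_n/SO(3)$, a manifold of dimension $n-1$. On $Y_n$ let $\ell$ be the last coordinate and $\rho := d(u_n,u_1)$. Then $X_n=\{\rho=\ell\}\subset Y_n$ with $\mu=\ell|_{X_n}$, while $N_n(a)=\{\ell=a\}\subset Y_n$ with $\rho_a=\rho|_{N_n(a)}$. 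Consequently $\mu^{-1}(a+j\varepsilon)\cap O$ is the fiber of $\Pi:=(\ell,\rho)\colon Y_n\to\R^2$ over $(a+j\varepsilon,\,a+j\varepsilon)$ meeting $O$, and $\rho_a^{-1}(a+\sigma j\varepsilon)\cap O$ is the fiber over $(a,\,a+\sigma j\varepsilon)$, where $\sigma=-1$ in case (i) and $\sigma=+1$ in case (ii). Thus the lemma reduces to comparing two nearby fibers of $\Pi$; the hypothesis that $O$ contains no degenerate polygon other than $(P,a)$ guarantees that $(P,a)$ is the only critical point involved.

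Next I would linearize at $(P,a)$. Writing configurations in coordinates $(\theta_i,z_i)$ adapted to the great circle $\gamma$ (longitude along $\gamma$, height $z_i$ transverse to it) and expanding $\cos d(u_i,u_{i+1})$ by the spherical law of cosines, the constraints $d(u_i,u_{i+1})=\ell$ force, to first order, each edge angle to change by $\sigma_i\,d\ell$, where $\sigma_i=\pm1$ records forward/back‑track. Summing around the polygon and using Remark \ref{en} together with \eqref{11}, the closing edge yields $d\rho=(f(P)-b(P)+1)\,d\ell=c\,d\ell$ at $(P,a)$, where $c:=1+2\pi w(P)/a$. Since $n$ is odd, $w(P)\ne 0$, so $c\ne 1$ and $\operatorname{sgn}(1-c)=-\operatorname{sgn}(w(P))$. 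In particular $(P,a)$ is a rank‑one critical point of $\Pi$, and both $\mu$ and $\rho_a$ are critical there.

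Now set $G:=\rho-c\,\ell$, so that $dG(P,a)=0$. On the leaf $N_n(a)=\{\ell=a\}$ one has $G=\rho_a-ca$, whose Hessian at $P$ is $D^2\rho_a|_P$; by Theorem \ref{KM3} this form has signature $(b+2w-1,\,f-2w-1)$ and is therefore nondegenerate, since $(b+2w-1)+(f-2w-1)=n-2=\dim N_n(a)$. This is exactly the hypothesis needed for the Morse lemma with parameter, taking the regular function $\ell$ as parameter: near $(P,a)$ there are coordinates $(\ell,x)$ with
\[
G=c(\ell)+Q(x),
\]
where $Q$ is nondegenerate of fixed index $r$ and $c(\ell)$ is the critical value; since $dG(P,a)=0$ we have $c'(a)=0$, so $c(\ell)=(1-c)a+O((\ell-a)^2)$.

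Finally I would read off the fibers. The fiber of $\Pi$ over $(\ell_0,\rho_0)$ is $\{Q(x)=s\}$ with $s=(\rho_0-c\ell_0)-c(\ell_0)$. For the $\rho_a$‑fiber $(\ell_0,\rho_0)=(a,\,a+\sigma j\varepsilon)$ one gets $s=\sigma j\varepsilon$, while for the $\mu$‑fiber $(\ell_0,\rho_0)=(a+j\varepsilon,\,a+j\varepsilon)$ one gets $s=(1-c)j\varepsilon+O(\varepsilon^2)$, of sign $-\operatorname{sgn}(w(P))\cdot\operatorname{sgn}(j)$. Hence choosing $\sigma=-\operatorname{sgn}(w(P))$, i.e. $\sigma=-1$ when $w>0$ and $\sigma=+1$ when $w<0$, makes the two signs of $s$ agree, exactly as in (i) and (ii). Two level sets $\{Q(x)=s\}$ with the same index $r$ and the same sign of $s$ are diffeomorphic (rescale $x$), so after shrinking $O$ the set $\mu^{-1}(a+j\varepsilon)\cap O$ is carried diffeomorphically onto an open subset of $\rho_a^{-1}(a+\sigma j\varepsilon)$, which is the assertion. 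The main work lies in the first‑order computation on the quotient $Y_n$ and in verifying that the parametrized Morse lemma applies — the latter resting squarely on the nondegeneracy supplied by Theorem \ref{KM3}; the ensuing sign bookkeeping is then routine.
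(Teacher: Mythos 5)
Your argument is correct, but it is a genuinely different proof from the one in the paper. The paper works entirely in the space of side-length vectors: it joins the length vector $(a+j\varepsilon,\dots,a+j\varepsilon)$ of the $\mu$-level set to the length vector $(a,\dots,a,a\mp j\varepsilon)$ of the appropriate $\rho_a$-level set by the explicit linear path \eqref{edge} (resp.\ \eqref{edge2}), and verifies via Remark \ref{en} and \eqref{11} that this path never meets the wall determined by the forward/back-track pattern of $P$, so the moduli spaces at the two endpoints are diffeomorphic; no Hessian information is used, and in particular the lemma stays logically independent of Theorem \ref{KM3}. You instead build the ambient quotient $Y_n$ carrying both families as fibers of $\Pi=(\ell,\rho)$, compute the first-order identity $d\rho=c\,d\ell$ with $c=1+2\pi w(P)/a$ (your bookkeeping here is right: among $e_1,\dots,e_{n-1}$ there are $f$ forward- and $b-1$ back-tracks, giving $f-b+1$), and then invoke the parametrized Morse lemma for $G=\rho-c\ell$, whose leafwise nondegeneracy is exactly Theorem \ref{KM3}. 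Your route buys a sharper conclusion -- it exhibits the common local normal form $\{Q(x)=s\}$ on both sides of the critical value, which is really what the proof of Theorem \ref{key} consumes -- and it makes the sign dichotomy $\operatorname{sgn}(1-c)=-\operatorname{sgn}(w(P))$ the visible source of the case split, whereas the paper's wall-avoidance inequalities $\varepsilon+(u-2)\delta>0$ and $\varepsilon+v\delta>0$ encode the same dichotomy less transparently. The costs are that your proof is intrinsically local (you must shrink $O$ to the Morse-lemma chart, which is harmless given that the statement only asks for a diffeomorphism onto \emph{an open set} and that only the local surgery data is used downstream) and that it imports the nondegeneracy of $D^2\rho_a$ from Theorem \ref{KM3} already at this stage. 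Both proofs are sound; yours is not the paper's.
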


\begin{proof}[Proof of Lemma \ref{diff}]
(i) We write the side-lengths of a spherical polygon as $(r_1, r_2,\dots,r_n)$. For $\delta \in [0,\varepsilon]$, we deform the side-lengths of an element
of $\mu^{-1} (a+\varepsilon) \cap O$ by 
\begin{equation}\label{edge}
(a+j\delta,a+j\delta,\cdots,a+j\delta, a+j(-\varepsilon+2\delta))
\end{equation}
We need to check that this deformation is indeed possible. To see this, it will suffice to see that \eqref{edge} does not cross a wall for any
$\delta$, where a wall is defined in \cite[p.311]{KM3}. 

Recall that $e_n$ is always back-track. (See Remark \ref{en}.) 
Hence we have about \eqref{edge} that 
\begin{align}\label{u}
&\text{the sum of forward-track side-lengths}-\text{the sum of  back-track side-lengths} \notag\\
&\phantom{\text{the sum of forward}} =2\pi w(P)+j \left( \varepsilon+(u-2)\delta\right)
\end{align}
for some integer $u$. We claim that $u\geq 2$. In fact, the assumption $w(P)>0$ tells us that at least 
$m+1$ of the first $2m$ components of \eqref{edge}
are forward-track. Hence we have $u\geq (m+1)-(m-1)=2$ and the claim follows.

Now the fact that $\varepsilon+ (u-2)\delta>0$ in \eqref{u} implies that \eqref{edge} does not cross a wall for any $\delta$. 
Finally, setting $\delta=0$ or $\varepsilon$ in \eqref{edge}, we complete the proof of (i). 

(ii) Instead of \eqref{edge}, we deform the side-lengths of an element
of $\mu^{-1} (a+\varepsilon) \cap O$ by 
\begin{equation}\label{edge2}
(a+j\delta,a+j\delta,\cdots,a+j\delta, a+j\varepsilon).
\end{equation}
we have about \eqref{edge2} that 
\begin{align}\label{uu}
&\text{the sum of forward-track side-lengths}-\text{the sum of  back-track side-lengths} \notag\\
&\phantom{\text{the sum of forward}} =2\pi w(P)-j \left(\varepsilon+v\delta\right)
\end{align}
for some integer $v$. We claim that $v \geq 0$. In fact,  the assumption $w(P)<0$ tells us that at most
$m$ of the first $2m$ components of \eqref{edge2}
are forward-track. Hence we have $-v\leq m-m=0$ and the claim follows.

Now the fact that $\varepsilon+ v\delta>0$ in \eqref{uu} implies that \eqref{edge2} does not cross a wall for any $\delta$. 
Finally, setting $\delta=0$ or $\varepsilon$ in \eqref{edge}, we complete the proof of (ii). 
This completes the proof of Lemma \ref{diff}. 
\end{proof}

\begin{proof}[Proof of Theorem \ref{key}] 
We apply Theorem \ref{surg} (i) to the map $\mu$. Note that $\dim X_n=n-2$. We denote by $r$ the index of $P$. 
When the level set $\mu^{-1} (a+\varepsilon)$ descends to 
$\mu^{-1}(a-\varepsilon)$, the critical point $P$ gives a surgery of type 
\begin{equation}\label{type}
(n-2-r,r).
\end{equation}
We set $\lambda:=f(P)-2w(P)-1$. 

(i) If $w(P)>0$, then Lemma \ref{diff} (i) tells us that the above descent is equivalent to the ascent from $\rho_a^{-1}(a-\varepsilon)$ to $\rho_a^{-1}(a+\varepsilon)$.
Combining Theorem \ref{surg} (i) and Theorem \ref{KM3}, when we cross through $P$, a surgery of type 
\begin{equation}\label{equi}
(\lambda,n-2-\lambda)
\end{equation}
occurs. 
Comparing \eqref{type} and \eqref{equi}, we ave
$$r=n-2-\lambda= b(P)+2w(P)-1.$$

(ii) If $w(P)<0$, then the descent from $\mu^{-1} (a+\varepsilon)$ to $\mu^{-1}(a-\varepsilon)$ is equivalent to the descent from
 from $\rho_a^{-1}(a+\varepsilon)$ to $\rho_a^{-1}(a-\varepsilon)$. When we cross through $P$, a surgery of type 
\begin{equation}\label{equi1}
(n-2-\lambda,\lambda)
\end{equation}
occurs. Comparing \eqref{type} and \eqref{equi1}, we have
$$r=\lambda= f(P)-2w(P)-1.$$
This completes the proof of Theorem \ref{key}. 
\end{proof}

\section{Proofs of Theorems A and B}
\begin{proof}[Proof of Theorem A] 
We prove the theorem along the lines of \cite[Theorem 2.9]{KM3}. Combining the following three assertions, we obtain Theorem A: 
\begin{enumerate}
\item (An analogue of \cite[Lemma 2.7 (ii)]{KM3}.) By \eqref{5}, the Zariski tangent space $T_P M_n(a)$ is given by
$$T_P M_n(a)= \text{ker}\, d\mu |_{(P,a)}.$$
\item (An analogue of \cite[Corollary 2.8]{KM3}.) We see from (i) that a point $(P,a)$ is a singular point of $X_n$ if and only of $(P,a)$ is a critical point of $\mu$.
\item By \cite[Theorem 1.1]{KMRIMS}, $P$ is a singular point of $M_n(a)$ if and only if $P$ is degenerate. 
\end{enumerate}
\end{proof}

\begin{proof}[Proof of Theorem B]
(i) We define the map
\begin{equation}\label{F}
F: U_n \to \Gamma_n
\end{equation}
as follows, where $U_n$ is defined in Proposition C (i): For $(P,a) \in X_n$, we set
\begin{equation}\label{cas}
F(P,a):=
\begin{cases}
(f(P)-b(P),2w(P)) &\quad \text{if $w(P)>0$}\\
(-f(P)+b(P), -2w(P)) & \quad \text{if $w(P)<0$.}
\end{cases}
\end{equation}
Note that \eqref{10} and \eqref{11} tell us that $F$ is certainly a map to $\Gamma_n$. Hence, in the notation of Theorem B (i), 
we can set $(\alpha,\beta):= F(P,a)$. Now we check the items (i) and (ii) of Theorem B below.

In what follows, we prove Table \ref{tab1}. First, \eqref{11} implies that $a=\beta\pi /\alpha$ for $w$ positive or negative.
Hence the critical value in Table \ref{tab1} is true. 

Second, we compute the index of $\mu$ at $(\alpha,\beta)$. Using \eqref{set} and \eqref{cas}, we have
\begin{align}
(f,b)=&\begin{cases}
(m+s+1,m-s), &\quad \text{if $w>0$},\\
(m-s,m+s+1), & \quad \text{if $w<0$}
\end{cases} \label{18}\\
\intertext{and}
w=&\begin{cases}
t, &\quad \text{if $w>0$},\\
-t,&\quad \text{if $w<0$}.
\end{cases}\label{19}
\end{align}
Using Theorem \ref{key}, \eqref{18} and \eqref{19}, we have 
$$\text{the index of $\mu$ at $(\alpha,\beta)$ is $ m-s+2t-1$}$$
for $w$ positive or negative. 
Hence the index of $\mu$ in Table \ref{tab1} is true. 

Third, we compute the number of critical points. 
\eqref{cas} tells us that critical points $(P, a)$ and $(Q,a)$ satisfy $F(P,a)=F(Q,a)$
if and only if $(f(P),b(P))=(f(Q),b(Q))$ or $(f(P),b(P))=(b(Q),f(Q))$. The description of $f$ in terms of $s$ is computed in \eqref{18}. Since $e_n$ is always
back-track, we need to choose $f$-elements from $\{e_1,e_2,\dots,e_{2m}\}$. The total number of such choices is
\begin{align*}
{2m \choose m+s+1}+{2m \choose m-s}=&{2m \choose m-s-1}+{2m \choose m-s}\\
=&{2m+1 \choose m-s}.
\end{align*}
Hence the number of critical points in Table \ref{tab1} is true. This completes the proof of Theorem B (i).

(ii) The item is already proved in the third part of the above proof of Theorem B (i). 
\end{proof}

\section{Proofs of Proposition C and Corollary D}
\begin{proof}[Proof of Proposition C]
(i) We consider Table \ref{tab1}. If we fix $\alpha=2s+1$, then the number of choices of $\beta$ is $s$. Moreover, if we fix $(\alpha,\beta)$, then
the number of the corresponding critical points is ${n \choose m-s}$. Varying $s$ in $1 \leq s \leq m$, we have
$$|U_n|=\sum_{s=1}^m s {n \choose m-s}.$$
Using \cite[A000531]{OEIS}), we obtain \eqref{num}. 

(ii) We set
$$\widetilde{\Gamma}_n := \{(\alpha,\beta) \in \Gamma_n \sbar \text{$\alpha$ and $\beta$ are coprime}\}.$$
Table \ref{tab1} tells us that 
\begin{equation}\label{c1}
|V_n|=|\widetilde{\Gamma}_n|.
\end{equation}
Let $p_1:\widetilde{\Gamma}_n\to \N$ be the projection to the first factor. We claim that for each $\alpha=2s+1$, we have
$|p_1^{-1}(\alpha)|=\varphi (\alpha)/2$. In fact, if we forget the condition ``$\beta$ is even" for $p_1^{-1}(\alpha)$, then its cardinality is
$\varphi (\alpha)$. Moreover, since $\alpha$ is odd, exactly one of $\beta$ and $\alpha-\beta$ is even for all $\beta \in \N$.
Namely, exactly one of $(\alpha,\beta)$ and $(\alpha,\alpha-\beta)$ is an element of $p_1^{-1}(\alpha)$. 
Hence the claim follows. 

Now varying $s$ in $1 \leq s \leq m$, we have 
\begin{equation}\label{c2}
|\widetilde{\Gamma}_n|=\sum_{s=1}^m \varphi(2s+1)/2.
\end{equation}
Combining \eqref{c1} and \eqref{c2}, we obtain Theorem C (ii).
\end{proof}

\begin{proof}[Proof of Corollary D]
(i) The item is a consequence of the following Stirling's formula:
$$k! \, \sim \, \sqrt{2\pi k} \left( \frac{k}{e} \right)^k \quad (k \to \infty).$$

(ii) It will suffice to prove the following:
\begin{lem}\label{asym}
We set 
$$\Psi(n):= \sum_{\substack{i=1 \\ \text{$i$ odd}}}^n \varphi(i).$$
Then we have 
\begin{equation}\label{har}
\Psi(n) \, \sim \, \frac{2n^2}{\pi^2} \quad (n \to \infty).
\end{equation}
\end{lem}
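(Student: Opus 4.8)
The plan is to estimate $\Psi(n)$ by the classical Dirichlet/Möbius technique, exploiting the identity $\varphi = \mu * \mathrm{id}$, namely $\varphi(i) = \sum_{d \mid i} \mu(d)\,(i/d)$. Since $\Psi(n)$ sums only over odd $i$, every divisor $d$ of such an $i$ is itself odd, so after the substitution $i = dm$ the summation decouples into a sum over odd $d$ and odd $m$ with $dm \le n$. This is the feature that will convert the well-known density constant $3/\pi^2$ of $\sum_{i\le n}\varphi(i)$ into the value claimed here.

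First I would interchange the order of summation to obtain
$$\Psi(n) = \sum_{\substack{d \le n \\ d \text{ odd}}} \mu(d) \sum_{\substack{m \le n/d \\ m \text{ odd}}} m.$$
The inner sum is a sum of consecutive odd integers; since $1 + 3 + \cdots + (2k-1) = k^2$ with $k = \lfloor (x+1)/2\rfloor$, one has $\sum_{m \le x,\ m \text{ odd}} m = \tfrac14 x^2 + O(x)$. Substituting $x = n/d$ gives
$$\Psi(n) = \frac{n^2}{4} \sum_{\substack{d \le n \\ d \text{ odd}}} \frac{\mu(d)}{d^2} + O\!\left(n \sum_{d \le n} \frac{1}{d}\right) = \frac{n^2}{4} \sum_{\substack{d \le n \\ d \text{ odd}}} \frac{\mu(d)}{d^2} + O(n \log n).$$

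Next I would evaluate the limiting Dirichlet series. Completing the truncated sum to the full series over odd $d$ introduces an error $O(1/n)$, since the tail is dominated by $\sum_{d > n} d^{-2} = O(1/n)$, and this contributes only $O(n)$ to $\Psi(n)$. The full series is computed from the Euler product for $1/\zeta(2)$ by deleting the factor at the prime $2$:
$$\sum_{\substack{d \ge 1 \\ d \text{ odd}}} \frac{\mu(d)}{d^2} = \prod_{p > 2}\left(1 - p^{-2}\right) = \frac{\prod_{p}\left(1 - p^{-2}\right)}{1 - 2^{-2}} = \frac{1/\zeta(2)}{3/4} = \frac{6/\pi^2}{3/4} = \frac{8}{\pi^2}.$$
Combining the estimates, $\Psi(n) = \tfrac{n^2}{4}\cdot \tfrac{8}{\pi^2} + O(n \log n) = \tfrac{2n^2}{\pi^2} + O(n \log n)$, and dividing by $2n^2/\pi^2$ yields \eqref{har}.

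The routine parts are the two elementary estimates (the inner odd-sum and the tail of the zeta-type series); the one step needing genuine care is the Euler-product evaluation of $\sum_{d \text{ odd}} \mu(d)/d^2$, i.e. correctly factoring out the prime $2$ to pass from $6/\pi^2$ to $8/\pi^2$. An alternative route, if one prefers to avoid infinite products, is to take the known asymptotic $\sum_{i \le n}\varphi(i) \sim 3n^2/\pi^2$ and split it into odd and even parts, using $\varphi(2j)=\varphi(j)$ for odd $j$ and $\varphi(2j)=2\varphi(j)$ for even $j$ to obtain the relation $E(n) = \Psi(n/2) + 2E(n/2)$ for the even part $E(n):=\sum_{i\le n,\ i\text{ even}}\varphi(i)$, and solving for the leading constant; but the Möbius approach above is the most direct.
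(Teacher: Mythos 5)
Your proof is correct and follows essentially the same route as the paper's: both expand $\varphi=\mu*\mathrm{id}$, interchange the order of summation over odd $d$ and odd $d'$, use $\sum_{m\le x,\,m\text{ odd}}m\sim x^2/4$, and reduce the constant to $\sum_{d\text{ odd}}\mu(d)/d^2=8/\pi^2$ (the paper computes this as $1/\widetilde{\zeta}(2)$ with $\widetilde{\zeta}(2)=\zeta(2)-\tfrac14\zeta(2)=\pi^2/8$, you via the Euler product — the same fact). Your version is in fact slightly more careful about the error terms than the paper's, which only tracks the asymptotics.
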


\begin{proof}[Proof of Lemma \ref{asym}]
To the best of the author's knowledge, there is no publication which proves the lemma. Hence we give a proof here.
We modify the proof of \cite[Theorem 330]{HW}, which claims that the average order of $\varphi(n)$ is $6n/\pi^2$. 
\begin{align}
\Psi(n) &= \sum_{\substack{i=1 \\ \text{$i$ odd}}}^n i  \sum_{d|i} \frac{\mu(d)}{d}
=\sum_{\substack{dd' \leq n \\ \text{$dd'$ odd}}} d' \mu(d) \notag\\
&= \sum_{\substack{d=1 \\ \text{$d$ odd}}}^n \mu(d) \sum_{\substack{d'=1 \\ \text{$d'$ odd}}}^{\big\lfloor n/d \big\rfloor} d'
= \sum_{\substack{d=1 \\ \text{$d$ odd}}}^n \mu(d) \left( \Big\lfloor\frac{\big\lfloor n/d \big\rfloor-1}{2} \Big\rfloor+1 \right)^2 \notag\\
&\sim \frac{n^2}{4} \sum_{\substack{d=1 \\ \text{$d$ odd}}}^n \frac{\mu(d)}{d^2} \sim 
  \frac{n^2}{4} \sum_{\substack{d=1 \\ \text{$d$ odd}}}^{\infty} \frac{\mu(d)}{d^2}= \frac{n^2}{4 \widetilde{\zeta}(2)} \label{23},
\end{align}
where we define 
$$\widetilde{\zeta}(2):= \sum_{\substack{j=1 \\ \text{$j$ odd}}}^{\infty} \frac{1}{j^2}.$$
Since
$$\zeta(2)= \widetilde{\zeta}(2)+ \frac{1}{4} \zeta(2),$$
we have 
\begin{equation}\label{wide}
\widetilde{\zeta}(2) = \frac{\pi^2}{8}.
\end{equation}
Substituting \eqref{wide} into \eqref{23}, we obtain \eqref{har}. This completes the proof of Lemma \ref{asym}, 
and hence also that of Corollary D (ii).
\end{proof}
\renewcommand{\qedsymbol}{}
\end{proof}

\section{Proof of Theorem E}
Throughout this section, let $p$ and $q$ be as in Theorem E (i) and (ii), respectively. 
We first prove the following:
\begin{lem}\label{sun} We fix an odd number $n$.

{\rm (i)} For $1 \leq i \leq p$, we have
\begin{equation}\label{ze1}
\zeta_i= \frac{2}{2m-2i+3}\pi.
\end{equation}
Moreover, if an element $(\alpha,\beta)$ of $ \Gamma_n$ satisfies that $\beta\pi/\alpha=\eqref{ze1}$, then $\alpha$ and $\beta$ are coprime.
Thus
\begin{equation}\label{30}
(\alpha,\beta)=(2m-2i+3,2).
\end{equation}

{\rm (ii)} For $\Phi(n)-q \leq i \leq \Phi(n)$, we have
\begin{equation}\label{ze2}
\zeta_i= \frac{2m-2\Phi(n)+2i}{2m-2\Phi(n)+2i+1}\pi.
\end{equation}
Moreover,  if an element $(\alpha,\beta)$ of $ \Gamma_n$ satisfies that $\beta\pi/\alpha=\eqref{ze2}$, then $\alpha$ and $\beta$ are coprime.
Thus
\begin{equation}\label{31}
(\alpha,\beta)=(2m-2\Phi(n)+2i+1,2m-2\Phi(n)+2i).
\end{equation}

\end{lem}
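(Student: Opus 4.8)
The plan is to identify the extreme critical values directly from the arithmetic of $\Gamma_n$. By Table \ref{tab1} and the proof of Proposition C, the distinct critical values are exactly the numbers $\frac{\beta}{\alpha}\pi$ with $(\alpha,\beta)\in\widetilde{\Gamma}_n$, i.e.\ $\alpha$ odd, $\beta$ even, $0<\beta<\alpha\leq n=2m+1$ and $\gcd(\alpha,\beta)=1$. So the whole lemma reduces to sorting the reduced fractions $\frac{\beta}{\alpha}$ in $(0,1)$ and reading off the smallest $p$ and the largest $q+1$ of them.

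For (i) I would treat the smallest values. Since $\beta$ is even and positive, $\beta\geq 2$, and the smallest fractions come from $\beta=2$: these are $\frac{2}{\alpha}$ with $\alpha$ odd, automatically reduced, and in increasing order they read $\frac{2}{2m+1}<\frac{2}{2m-1}<\cdots$, which matches \eqref{ze1}. The only thing to rule out is that a fraction with $\beta\geq 4$ drops below the $p$-th of these. Any $(\alpha,\beta)\in\Gamma_n$ with $\beta\geq 4$ satisfies $\frac{\beta}{\alpha}\geq\frac{4}{2m+1}$, so I would compare $\frac{4}{2m+1}$ with the candidate $\zeta_p$ from \eqref{ze1}: a short computation gives $\frac{4}{2m+1}>\frac{2}{2m-2p+3}$ precisely when $p<\frac{2m+5}{4}$, which holds for $p=\lfloor m/2\rfloor+1$. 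Hence the $p$ smallest critical values all have $\beta=2$ and equal $\frac{2}{2m-2i+3}\pi$.

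For (ii) I would measure closeness to $\pi$ by the defect $\frac{\alpha-\beta}{\alpha}$; since $\alpha$ is odd and $\beta$ even, $c:=\alpha-\beta$ is odd, so $c\in\{1,3,5,\dots\}$, and for a reduced fraction $\gcd(\alpha,c)=\gcd(\alpha,\beta)=1$. The critical value is $\frac{\alpha-1}{\alpha}\pi$ exactly when $c=1$; these are automatically reduced, and in decreasing order the largest of them are $\frac{2m}{2m+1},\frac{2m-2}{2m-1},\dots$, matching \eqref{ze2}. The competitors have $c\geq 3$, hence defect $\frac{c}{\alpha}\geq\frac{3}{2m+1}$. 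I would then check that the defect $\frac{1}{2m-2q+1}$ of the $(q+1)$-st candidate is still smaller than $\frac{3}{2m+1}$, i.e.\ $q<\frac{2m+1}{3}$, which holds for $q=\lfloor 2m/3\rfloor$ in every residue class of $m$ modulo $3$. Thus the $q+1$ largest critical values are exactly the $c=1$ fractions of \eqref{ze2}. Finally, for coprimality and the explicit pairs \eqref{30} and \eqref{31}: each target value is already in lowest terms (its denominator is odd and, in the second case, consecutive to the numerator), so any $\Gamma_n$-representative is an odd multiple $(\alpha,\beta)=(\alpha_0 j,\beta_0 j)$ of the reduced pair; the constraint $\alpha\leq 2m+1$ forces $j=1$, since the ranges $i\leq p$ and $k=\Phi(n)-i\leq q$ are precisely what guarantee $3\alpha_0>2m+1$.

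The main obstacle I expect is confirming that no ``second-tier'' fraction intrudes into the first $p$ or last $q+1$ slots ($\beta=4$ in (i), defect $c\geq 3$ in (ii)); this is exactly what pins the thresholds $p=\lfloor m/2\rfloor+1$ and $q=\lfloor 2m/3\rfloor$. A secondary subtlety in (ii) is that a formal $c=3$ fraction such as $\frac{3}{2m+1}$ may fail to be reduced when $3\mid 2m+1$, and would then actually be a $c=1$ value already on the list; this is harmless, because the bound $\frac{c}{\alpha}\geq\frac{3}{2m+1}$ needs only a numerical lower bound on the defect, not that the extremal competitor itself be reduced.
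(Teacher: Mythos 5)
Your proposal is correct and takes essentially the same route as the paper: both identify the extreme critical values as the reduced fractions with $\beta=2$ (resp.\ $\alpha-\beta=1$) and pin down the thresholds $p=\lfloor m/2\rfloor+1$ and $q=\lfloor 2m/3\rfloor$ by comparison with the first second-tier value $\frac{4}{2m+1}\pi$ (resp.\ $\frac{2m-2}{2m+1}\pi$, i.e.\ defect $\geq 3/(2m+1)$). You simply supply the routine verifications (ordering, coprimality, uniqueness of the representative in $\Gamma_n$) that the paper dismisses as easy, and your remark about $\frac{2m-2}{2m+1}$ possibly failing to be reduced is a harmless refinement handled correctly.
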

\begin{proof} 
Since the proof of the lemma is easy, we check only the reason why the bounds $i\leq p$ and $\Phi(n)-q \leq i$ appear. 
It is easy to see that if we write $V_n$ in \eqref{cv} in the ascending order, it has the following form:
\begin{align*}
&V_n= \biggl\{ \text{several elements of the form \eqref{ze1}},\;\frac{4}{2m+1}\pi, \cdots,\biggr.\\
&\biggl.\phantom{several elements of} \frac{2m-2}{2m+1}\pi, \;\text{several elements
of the form \eqref{ze2}}\biggr\}.
\end{align*}

(i) In order that the inequality 
$$\eqref{ze1} <\frac{4}{2m+1}\pi$$
holds, we must have $i \leq p$. 

(ii) In order that the inequality
$$\frac{2m-2}{2m+1}\pi< \eqref{ze2}$$
holds, we must have $\Phi(n)-(2m+1)/3< i$. Namely, we must have $\Phi(n)-q\leq i$. 
\end{proof}

\begin{prop}\label{rec} We fix an odd number $n$. 

{\rm (i)} We define the recurrence relation $\{\sigma_i\}_{i=0}^\infty$ as follows{\rm :}
\begin{align*}\label{s1}
&\sigma_0= (-1)^{m+1} {2m \choose m} \quad \text{and}\\
&\sigma_{i+1}= \sigma_{i}+2 (-1)^{i +1} {n \choose i} \quad \text{for $i\geq 0$}.
\end{align*}
Then we have 
\begin{equation}\label{om1}
\sigma_i= \Omega_i
\;\; \text{
for \;$0 \leq i \leq p$.}
\end{equation}

{\rm (ii)} We define the recurrence relation $\{\tau_i\}_{i=0}^\infty$ as follows{\rm :}
\begin{align*}\label{s1}
&\tau_0=2 \quad \text{and}\\
&\tau_{i+1}= \tau_{i}+2 (-1)^{i+1}  {n \choose i+1} \quad \text{for $i\geq 0$}.
\end{align*}
Then we have 
\begin{equation}\label{om2}
\tau_i= \Omega_{\Phi(n)-1-i}
\;\; \text{
for \;$0 \leq i \leq q$.}
\end{equation}
\end{prop}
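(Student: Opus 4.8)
The plan is to prove both parts by induction on $i$, reading off each single step from a Morse surgery of $\mu$ across one critical value. Recall that $\Omega_i=\chi(M_n(a))=\chi(\mu^{-1}(a))$ for $a\in(\zeta_i,\zeta_{i+1})$, so the difference between consecutive $\Omega$'s records exactly what happens as $a$ moves through one $\zeta_j$. By Lemma~\ref{sun}, in the two ranges under consideration each relevant $\zeta_j$ is realized by a single coprime pair $(\alpha,\beta)\in\Gamma_n$; hence, by Theorem~B, all critical points over that level share one index, and I can read off both their number and their index. The engine of the argument is then Theorem~\ref{surg}(ii), which turns this data into the jump in $\chi$.

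For part (i), to pass from $\Omega_i$ to $\Omega_{i+1}$ I cross $\zeta_{i+1}$. Lemma~\ref{sun}(i) gives $(\alpha,\beta)=(2m-2i+1,2)$, i.e. $s=m-i$, $t=1$, so by Theorem~B there are ${n \choose m-s}={n \choose i}$ critical points, each of index $m-s+2t-1=i+1$. Applying Theorem~\ref{surg}(ii) with $\xi_1\in(\zeta_i,\zeta_{i+1})$ and $\xi_2\in(\zeta_{i+1},\zeta_{i+2})$ yields
$$\Omega_i=\Omega_{i+1}+{n \choose i}\cdot 2(-1)^{(i+1)+1},$$
that is $\Omega_{i+1}=\Omega_i+2(-1)^{i+1}{n \choose i}$, which is exactly the recurrence for $\sigma_{i+1}$; this is legitimate whenever $i+1\le p$. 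For part (ii) I instead descend through $\zeta_{\Phi(n)-1-i}$. Lemma~\ref{sun}(ii) gives $(\alpha,\beta)=(2m-2i-1,2m-2i-2)$, hence $s=m-i-1$, $t=m-i-1$, producing ${n \choose m-s}={n \choose i+1}$ critical points of index $m-s+2t-1=2m-i-2$. Theorem~\ref{surg}(ii) then contributes a jump of $2(-1)^{(2m-i-2)+1}{n \choose i+1}$, and since $(-1)^{2m-i-1}=(-1)^{i+1}$ this reads
$$\Omega_{\Phi(n)-2-i}=\Omega_{\Phi(n)-1-i}+2(-1)^{i+1}{n \choose i+1},$$
which is precisely the recurrence for $\tau_{i+1}$, valid while $\Phi(n)-1-i\ge\Phi(n)-q$.

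It remains to pin down the two base values. For $\tau_0$ I use Remark~\ref{morse}(i): on $(\zeta_{\Phi(n)-1},\zeta_{\Phi(n)})$ the Morse lemma identifies $M_n(a)\cong S^{n-3}=S^{2m-2}$, an even-dimensional sphere, so $\Omega_{\Phi(n)-1}=\chi(S^{2m-2})=2=\tau_0$. For $\sigma_0$ I must show $\Omega_0=(-1)^{m+1}{2m \choose m}$. Since $\mu$ has no critical value in $(0,\zeta_1)$, the diffeomorphism type of $M_n(a)$ is constant there, and as $a\to 0^+$ the space $M_n(a)$ limits to the planar equilateral $n$-gon space (the edges rescale into a tangent plane, with $SO(3)$ acting as the group of orientation-preserving planar isometries). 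Its Euler characteristic is the classically known $(-1)^{m+1}{2m \choose m}$ — for instance, for $n=5$ this is the genus-$4$ surface with $\chi=-6$ — which I would cite from the planar polygon-space literature, e.g. \cite{FS}.

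The main obstacle is twofold. First, the base value $\Omega_0$ is the one ingredient that does \emph{not} come from the internal Morse machinery: it needs the external identification of the small-$a$ spherical polygon space with the planar polygon space together with the latter's known Euler characteristic, and making that limiting diffeomorphism precise is the delicate point. Second, Theorem~\ref{surg}(ii) is phrased for a \emph{single} critical point, whereas each $\zeta_j$ here carries ${n \choose j-1}$ of them. I would dispose of this by perturbing $\mu$ slightly so as to split those critical values inside a thin slab (leaving the bounding level sets unchanged up to diffeomorphism) and then applying Theorem~\ref{surg}(ii) once per critical point; the changes in $\chi$ add. This additivity is sound only because Lemma~\ref{sun} guarantees that, exactly in the ranges $i\le p$ and $\Phi(n)-q\le i$, all critical points over a given $\zeta_j$ arise from one $(\alpha,\beta)$ and hence share a single index — which is precisely why the cutoffs $p$ and $q$ appear in the statement.
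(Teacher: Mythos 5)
Your inductive steps coincide with the paper's proof: the paper likewise crosses $\zeta_{k+1}$ (resp.\ $\zeta_{\Phi(n)-k-1}$), reads off $(s,t)=(m-k,1)$ (resp.\ $s=t=m-k-1$) from Lemma \ref{sun}, extracts the count $\binom{n}{k}$ (resp.\ $\binom{n}{k+1}$) and the index $k+1$ (resp.\ $2m-k-2$) from Table \ref{tab1}, and applies Theorem \ref{surg}(ii); your arithmetic there is correct, and your explicit remark about splitting the $\binom{n}{j}$ critical points sharing one critical value is a legitimate (and in the paper tacit) technical point. The one place you genuinely diverge is the base value $\Omega_0$. The paper does \emph{not} pass to the planar limit: it computes $\Omega_0$ entirely inside the Morse machinery, by summing the contributions $2(-1)^{r+1}$ of \emph{all} critical points of $\mu$ (starting from the empty level set above $\zeta_{\Phi(n)}$), which by Table \ref{tab1} and Remark \ref{parity}(ii) gives
\begin{equation*}
\Omega_0=2\sum_{s=1}^{m}(-1)^{m+s}\,s\binom{n}{m-s}=(-1)^{m+1}\binom{2m}{m},
\end{equation*}
reducing the base case to a binomial identity. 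Your route instead needs the diffeomorphism between $M_n(a)$ for small $a$ and the planar equilateral polygon space plus the known Euler characteristic of the latter; that statement is true and citable, but as you yourself note the limiting diffeomorphism is the delicate point and is left unproved in your sketch, and it imports external input where none is needed. I would replace that paragraph by the internal computation above, which uses only Theorem B and Theorem \ref{surg}(ii) already at your disposal; otherwise the proposal is sound.
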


\begin{proof}
(i) We prove \eqref{om1} by induction on $i$. 

(a) For the case $i=0$, using Table \ref{tab1} and Remark \ref{parity} (ii), we have 
\begin{align*}
\chi (M_n (a))=&2 \sum_{s=1}^m (-1)^{m+s} s {n \choose m-s}\\
=&(-1)^{m+1} {2m \choose m}.
\end{align*}

(b) Assume that \eqref{om1} holds for $i=k$. In order to complete the induction, we need to prove 
\begin{equation}\label{ind1}
\Omega_{k+1}=\Omega_k+2  (-1)^{k+1}  {n \choose k}.
\end{equation}
We consider Theorem \ref{surg}  for the situation that the level set of the function \eqref{4} crosses through $\mu^{-1} (\zeta_{k+1})$. 
If we set $i=k+1$ in \eqref{30}, we obtain the element $(\alpha,\beta)$ which satisfies that $\beta\pi/\alpha= \zeta_{k+1}$. 
In the notation of Theorem B (i), this $(\alpha,\beta)$ is given by 
\begin{equation*}\label{ST}
s= m-k \quad \text{and} \quad t=1.
\end{equation*}
Substituting this into Table \ref{tab1}, we see that the number of critical points is ${n \choose k}$ and the index is $k+1$. 
Using Theorem \ref{surg} (ii), we have
$$\Omega_k = \Omega_{k+1}+ 2  (-1)^{k+2}  {n \choose k}.$$
Thus we obtain \eqref{ind1}.

(ii) We prove \eqref{om2} by induction on $i$.

(a) The case for $i=0$ follows from Table \ref{tab1} easily. We may also deduce the case from the Morse lemma. 
(See Remark \ref{morse} (i).)

(b) Assume that \eqref{om2} holds for $i=k$. In order to complete the induction, we need to prove 
\begin{equation}\label{ind2}
\Omega_{\Phi(n)-k-2}=\Omega_{\Phi(n)-k-1}+2  (-1)^{k+1}  {n \choose k+1}.
\end{equation}
We consider the situation that the level set of the function \eqref{4} crosses through 
$\mu^{-1} (\zeta_{\Phi(n)-k-1})$. 
If we set $i=\Phi(n)-k-1$ in \eqref{31}, we obtain the element $(\alpha,\beta)$ which satisfies that $\beta\pi/\alpha= \zeta_{\Phi(n)-k-1}$. 
In the notation of Theorem B (i), this $(\alpha,\beta)$ is given by 
\begin{equation*}\label{ST1}
s=t=m-k-1.
\end{equation*}
Substituting this into Table \ref{tab1} , we see that the number of critical points is ${n \choose k+1}$ and the index is $2m-k-2$. 
Using Theorem \ref{surg} (ii), we obtain \eqref{ind2}.
\end{proof}

\begin{proof}[Proof of Theorem E]
Theorem E follows easily from Proposition \ref{rec}. 
\end{proof}

\section{Proof of Theorem F}
Recall that $V_n$ is defined in \eqref{cv}. 
Let us write up elements $\zeta_i \in V_n$ such that $\pi/2<\zeta_i$. 
We set 
\begin{equation}\label{theta}
\theta (n):=\left\{\frac{2t}{2s+1}\pi \longsbar 1 \leq s \leq m, \;\  \big\lfloor s/2\big\rfloor +1 \leq t \leq s\right\}. 
\end{equation}
It is clear that $\zeta_i \in V_n$ satisfies $\pi/2 < \zeta_i$ if and only if $\zeta_i \in \theta(n)$. 
In order to prove Theorem F (i), we need to determine $|\theta(n)|$, the number of irreducible fractions in $\theta(n)$. 
For that purpose, we first prove the following: 
\begin{lem}\label{lem1}
For a fixed $s$, we put
$$J_s:=\left\{t \in \N \sbar \big\lfloor s/2 \big\rfloor +1 \leq t\leq s \; \text{and $t$ is prime to $2s+1$} \right\}.$$
Then we have 
$$|J_s |= \varphi (2 \big\lfloor (s+1)/2 \big\rfloor-1,4s+2),$$ 
where $\varphi (x,d)$ is defined above Theorem F. 
\end{lem}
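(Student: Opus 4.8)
The plan is to exhibit an explicit bijection between $J_s$ and the set counted by the Legendre totient function on the right-hand side, namely the positive integers $\leq x$ that are prime to $4s+2$, where I abbreviate $x := 2\lfloor (s+1)/2\rfloor - 1$. Since $4s+2 = 2(2s+1)$, an integer is prime to $4s+2$ if and only if it is odd and prime to $2s+1$; hence $\varphi(x,4s+2)$ counts exactly the odd integers in $[1,x]$ that are prime to $2s+1$. So it suffices to match $J_s$ with this latter set.

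First I would introduce the affine map $\phi(t) := (2s+1) - 2t$. Because $2s+1$ is odd, $\phi(t)$ is always odd, and $\gcd(\phi(t),2s+1) = \gcd(2t,2s+1) = \gcd(t,2s+1)$, so $\phi$ carries integers prime to $2s+1$ to odd integers prime to $2s+1$, and it is clearly injective. It then remains to check that $\phi$ maps the index range $\lfloor s/2\rfloor + 1 \leq t \leq s$ bijectively onto the odd integers in $[1,x]$.

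The verification splits according to the parity of $s$. When $s$ is even, $\lfloor s/2\rfloor = s/2$ and $x = s-1$; as $t$ runs from $\lfloor s/2\rfloor + 1$ up to $s$, the value $\phi(t)$ decreases in steps of $2$ through the odd numbers $s-1, s-3, \dots, 3, 1$, which is precisely the full list of odd integers in $[1,s-1] = [1,x]$. When $s$ is odd, $\lfloor s/2\rfloor = (s-1)/2$ and $x = s$; the same computation gives $\phi(t)$ sweeping the odd values $s, s-2, \dots, 3, 1$, i.e. all odd integers in $[1,s] = [1,x]$. In either case $\phi$ restricts to a bijection from $J_s$ onto the odd integers in $[1,x]$ prime to $2s+1$, and combining this with the first paragraph yields $|J_s| = \varphi(x,4s+2)$, which is the claim.

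The only delicate point is the bookkeeping of endpoints: one must confirm that the two endpoints of the $t$-range are sent to $x$ and to $1$ respectively under $\phi$, and that the uniform formula $x = 2\lfloor (s+1)/2\rfloor - 1$ correctly interpolates the two cases $x = s-1$ and $x = s$. Beyond this careful handling of the floor functions I do not anticipate any genuine obstacle, since the coprimality-preservation and the step-by-$2$ structure of $\phi$ make the bijection transparent.
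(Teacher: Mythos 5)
Your proposal is correct and coincides with the paper's own argument: the map $\phi(t)=2s+1-2t$ is exactly the bijection $G:J_s\to K_s$ used in the paper, and your observation that being prime to $4s+2$ is equivalent to being odd and prime to $2s+1$ is the paper's identification $K_s=L_s$. You merely carry out the endpoint/parity bookkeeping that the paper leaves to the reader.
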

\begin{proof}[Proof of Lemma \ref{lem1}]
First, we set
\begin{equation*}
K_s:=\left\{i \in \N\sbar  1 \leq i \leq  2\big\lfloor (s+1)/2 \big\rfloor-1, \; \text{$i$ is odd and prime to $2s+1$}\right\}.
\end{equation*}
We define the map $G: J_s \to K_s$ by $G(t):=2s+1-2t$. Then it is easy to see that $G$ is a bijection. 

Second, we set
$$L_s:=\left\{ i\in \N\sbar  1 \leq i \leq 2\big\lfloor (s+1)/2 \big\rfloor-1 \; \text{and $i$ is prime to $4s+2$}\right\}.$$
Since an element of $K_s$ is odd, we have 
$K_s = L_s$. 
Moreover, from the definition of $\varphi(x,d)$, we have $|L_s|= \varphi (2 \big\lfloor (s+1)/2 \big\rfloor-1,4s+2).$
Combining the above results, Lemma \ref{lem1} follows. 
\end{proof}

\begin{proof}[Proof of Theorem F]
(i) Lemma \ref{lem1} tells us that $|\theta(n)|= \psi(n)$, where $\psi (n)$ is defined in \eqref{psi}. 
Since the number of critical values of $\mu$ is $\Phi(n)$ by Proposition C (ii), we obtain Theorem F (i). 

(ii) For $\theta(n)$ in \eqref{theta}, we set $\theta(n)^c := V_n \setminus \theta(n)$. 
Then $\zeta_k$ and $\zeta_{k+1}$ for $k$ as given in Theorem F (i) are specified as follows: 
\begin{equation}\label{max}
\zeta_k = \text{max}\; \theta(n)^c \quad \text{and} \quad \zeta_{k+1} = \text{min}\; \theta(n).
\end{equation}
Computing \eqref{max} explicitly, we obtain (ii). 

(iii) Using Theorem B, Remark \ref{parity} (ii), Theorem \ref{surg} (ii) and  \eqref{theta}, we have
\begin{align}
\chi (M_n (\pi/2))=&2 \sum_{s=1}^m (-1)^{m+s} \left(s- \big\lfloor s/2 \big\rfloor \right)  {n \choose m-s}\notag \\
=& 2(-1)^m \sum_{s=1}^m (-1)^s \big\lfloor (s+1)/2 \big\rfloor {2m+1 \choose m-s}\notag \\
=& 2 (-1)^{m+1} \sum_{i=0}^{m+1} (-1)^i\big\lfloor i /2\big\rfloor {2m+1 \choose m+i} \label{29}.
\end{align}
It is proved in \cite{K11} that 
$$ \sum_{i=0}^{m+1} (-1)^i\big\lfloor i /2\big\rfloor {2m+1 \choose m+i}=2^{2m-2}.$$
Hence we have from \eqref{29} that $\chi (M_n(\pi/2))=(-1)^{m+1}\cdot  2^{2m-1}.$
\end{proof}

\end{document}